\newtheorem{theorem}{Theorem}[section]
\newtheorem{proposition}[theorem]{Proposition}
\newtheorem{lemma}[theorem]{Lemma}
\newtheorem{corollary}[theorem]{Corollary}
\newtheorem{assumption}[theorem]{Assumption}
\DeclareMathOperator{\supp}{supp}
\DeclareMathOperator{\vspan}{span}
\DeclareMathOperator{\dist}{dist}
\title{Directional polynomial wavelets on spheres}
\date{}
\author{Frederic Schoppert \orcidlink{0000-0002-8682-3723} \\ \href{mailto:f.schoppert@uni-luebeck.de}{f.schoppert@uni-luebeck.de} }
\affil{Institute of Mathematics, University of L{\"u}beck, Ratzeburger Allee 160, 23562 L{\"u}beck, Germany}
\begin{document}

\maketitle

\begin{abstract}
In this article, we construct discrete tight frames for $L^2(\mathbb{S}^{d-1})$, $d\geq3$, which consist of localized polynomial wavelets with adjustable degrees of directionality. In contrast to the well studied isotropic case, these systems are well suited for the direction sensitive analysis of anisotropic features such as edges. The price paid for this is the fact that at each scale the wavelet transform lives on the rotation group $SO(d)$, and not on $\mathbb{S}^{d-1}$ as in the zonal setting. Thus, the standard approach of building discrete frames by sampling the continuous wavelet transform requires a significantly larger amount of sample points. However, by keeping the directionality limited, this number can be greatly reduced to the point where it is comparable to the number of samples needed in the isotropic case. Moreover, the limited directionality is reflected in the wavelets being steerable and their great localization in space leads to a fast convergence of the wavelet expansion in the spaces $L^p(\mathbb{S}^{d-1})$, $1\leq p \leq \infty$.
\end{abstract}

\paragraph{Keywords.}
Wavelets on the sphere, localized polynomial frames, directional wavelets

\paragraph{Mathematics Subject Classification.} 42C15, 42C40, 65T60

\section{Introduction}
Localized polynomial frames for $L^2(\mathbb{S}^{d-1})$, $d\geq3$, consisting of zonal (i.e.\ isotropic) analysis functions have been extensively studied in the literature (see e.g.\ \citep{bib3, bib11, bib15, bib29, bib30, bib35, bib36, bib37}). A reoccurring central part in these investigation is the pursuit and utilization of good localization bounds which, on the one hand, confirm the systems ability to perform precise position-frequency analyses and, on the other hand, allow for good approximation properties in more general spaces such as $L^p(\mathbb{S}^{d-1})$, $1\leq p \leq \infty$. The advantage of considering zonal frame functions lies in their simple structure. Indeed, isotropic functions on the sphere correspond to one-dimensional algebraic polynomials and thus their localization properties can be studied in an uni-variate setting. In general, working with polynomial frames has far-reaching computational advantages. For example, it enables the exact reconstruction of band-limited signals in practice. Also, in this case the frame coefficients are finite linear combinations of the classical Fourier coefficients.

As a more recent development, localized polynomial frames which are also directional have been introduced on the two-dimensional unit sphere $\mathbb{S}^2$ to allow for an appropriate position-based analysis of aniso\-tropic features \citep{bib21, bib1, bib9, bib10}. Indeed, we previously derived new and improved localization bounds for these systems and showed that local geometric properties of edges in signals are accurately displayed by the correspon\-ding frame coefficients in an arbitrary small neighborhood \citep{bib26, bib31}. 

In this paper, we introduce and investigate a large class of localized directional polynomial frames for $L^2(\mathbb{S}^{d-1})$, $d\geq 3$. Our systems result from a construction which generalizes the approach in \citep{bib21, bib9, bib10} to higher dimensions and most of the frames mentioned above are included as special cases. The analysis functions, or wavelets, are designed to be of adjustable, but limited, directional sensitivity which makes them steerable. Also, as usual for polynomial systems, we obtain fully discretized frames by sampling the corresponding continuous wavelet transform. In the isotropic case, the latter produces polynomials on $\mathbb{S}^{d-1}$ and a discrete frame can be obtained by choosing a suitable sequence of quadrature points on the sphere. In our general setting, the continuous wavelet transform lives on the special orthogonal group $SO(d)$ and therefore requires a much higher amount of samples. However, by utilizing the fact that our wavelets are limited in their directionality, this number can be reduced significantly. Indeed, the amount of sample points needed at each scale is similar to the isotropic case. A central result of this paper is the localization bound for the frame functions given in \autoref{theorem1}. It generalizes the well known estimate for isotropic systems and enables us to show that, under mild assumptions on the discretization grid, the frame expansion converges fast in $L^p(\mathbb{S}^{d-1})$, $1 \leq p \leq \infty$. Finally, we explicitly discuss special cases where the wavelet functions are highly symmetric and exhibit an optimal directionality.

We point out that for $d=3$ most of the results presented in this article were previously established in \citep{bib21, bib9, bib10}. This includes the construction of fully discretized directional wavelet systems as well as corresponding properties such as steerability, optimal (but limited) directional sensitivity, reduced computational cost, symmetries and spatial localization. With this current article, we add to this a discussion on the convergence properties of the directional wavelet expansion in $L^p(\mathbb{S}^2)$, $1 \leq p \leq \infty$,  which has not been considered in the literature. Additionally, our localization bound in \autoref{theorem1} slightly improves the existing estimate from \citep{bib9}.

The remainder of this article is organized as follows. In \autoref{sec2}, we give a brief summary of Fourier analysis on spheres and rotation groups. Particularly, we highlight several key properties of polynomials on $\mathbb{S}^{d-1}$ and $SO(d)$, as well as their connections. Moreover, an important class of polynomial subspaces on $SO(d)$ is introduced and investigated. In \autoref{sec3}, fully discretized polynomial frames for $L^2(\mathbb{S}^{d-1})$ are constructed. Properties regarding the localization and directionality of these systems are investigated in \autoref{sec4}. In \autoref{sec5}, we discuss approximation in $L^p(\mathbb{S}^{d-1})$ and, finally, in \autoref{sec6}, we propose special cases of highly symmetric wavelets with an optimal directionality.

\section{Preliminaries} \label{sec2}
In this section, we summarize some basic concepts of harmonic analysis on spheres and rotation groups, which will be of fundamental importance in our constructions. Indeed, our frame elements, the directional wavelets, are designed to be polynomials on $\mathbb{S}^{d-1}$ and the corresponding wavelet transform lives on $SO(d)$. Thus, the calculations involved heavily rely on Fourier methods for both of these spaces. The majority of formulas given in this section can be found in classical literature such as \citep[Chapter~9]{bib32} and are stated without further comment. 

For $d\geq 3$, we consider the euclidean space $\mathbb{R}^d$ equipped with the inner product $\langle x, y \rangle = x^\top y$ and the induced norm $\| \cdot \|$. Its canonical basis vectors will be denoted by $e^j=(\delta_{i, j})_{i=1}^d$, $j=1, ..., d$, where
\begin{equation*}
\delta_{i, j}= \begin{cases}
1, \quad & i=j,\\
0,  & i \neq j.
\end{cases}
\end{equation*}
The unit sphere $\mathbb{S}^{d-1} = \{ x \in \mathbb{R}^d : \| x \| = 1  \} $ can be parameterized in terms of spherical coordinates $\theta_1 \in [0, 2\pi)$, $\theta_2, ..., \theta_{d-1}\in [0, \pi]$ via
\begin{equation*}
\eta(\theta_1, \theta_2, ..., \theta_{d-1}) = \begin{pmatrix}
\sin \theta_{d-1}\, ... \, \sin \theta_2 \, \sin \theta_1 \\
\sin \theta_{d-1}\, ... \, \sin \theta_2 \, \cos \theta_1 \\
\sin \theta_{d-1}\,  ...\,  \sin \theta_3 \, \cos \theta_2 \\
\vdots \\
\sin \theta_{d-1} \, \cos \theta_{d-2} \\
\cos \theta_{d-1}
\end{pmatrix}.
\end{equation*}
A metric on $\mathbb{S}^{d-1}$ is given by the geodesic distance
\begin{equation*}
\dist(\eta, \nu) = \arccos(\langle \eta, \nu \rangle), \quad \eta, \nu \in \mathbb{S}^{d-1}.
\end{equation*}
In particular, $\dist(\eta(\theta_1, ..., \theta_{d-1}), e^d) = \theta_{d-1}$. By $\omega_{d-1}$ we will denote the usual rotation-invariant surface measure on $\mathbb{S}^{d-1}$ which is normalized such that
\begin{equation*}
\int_{\mathbb{S}^{d-1}} \mathrm{d}\omega_{d-1} = 1. 
\end{equation*}
In spherical coordinates,
\begin{equation*}
\mathrm{d}\omega_{d-1} = \frac{\Gamma(\frac{d}{2})}{2 \pi^{d/2}} \sin^{d-2}\theta_{d-1} \, ...\,\sin \theta_2 \, \mathrm{d}\theta_1 \, ...\, \mathrm{d}\theta_{d-1}.
\end{equation*}
With respect to the surface measure $\omega_{d-1}$, we define the spaces 
\begin{equation*}
L^p(\mathbb{S}^{d-1}) = \{ f \colon \mathbb{S}^{d-1} \rightarrow \mathbb{C} \mid f  \text{ measurable and } \|f\|_{L^p(\mathbb{S}^{d-1})} < \infty \},
\end{equation*}
where
\begin{equation*}
\| f\|_{L^p(\mathbb{S}^{d-1})} =
 \left( \int_{\mathbb{S}^{d-1}} \lvert f \rvert^p \, \mathrm{d}\omega_{d-1} \right)^{1/p}, \quad 0 <p < \infty.
\end{equation*}
For $p=\infty$, we consider $L^\infty(\mathbb{S}^{d-1})$ to be the space of continuous functions on $\mathbb{S}^{d-1}$ with the supremum norm. The Hilbert space $L^2(\mathbb{S}^{d-1})$ is equipped with the inner product
\begin{equation*}
\langle f_1, f_2 \rangle_{\mathbb{S}^{d-1}} = \int_{\mathbb{S}^{d-1}} f_1 \, \overline{f_2} \, \mathrm{d}\omega_{d-1}.
\end{equation*}
An explicit orthonormal basis for $L^2(\mathbb{S}^{d-1})$ is given by the spherical harmonics
\begin{align}\label{y_n^k def}
Y_k^{d,n}(\theta_1, ..., \theta_{d-1})= A_k^n \prod_{j=0}^{d-3} C_{ k_j  - \lvert k_{j+1}\rvert}^{\frac{d-j-2}{2}+\lvert k_{j+1}\rvert}(\cos \theta_{d-j-1}) \, \sin^{\lvert k_{j+1}\rvert}(\theta_{d-j-1}) \, \mathrm{e}^{\mathrm{i}k_{d-2}\theta_1},
\end{align}
where  $k_0=n \in \mathbb{N}_0$ and $k=(k_1, ..., k_{d-2})\in \mathcal{I}_n^d$ with
\begin{equation*}
\mathcal{I}_n^d = \{ (k_1, ..., k_{d-2}) \in \mathbb{N}_0^{d-3}\times \mathbb{Z} : n\geq k_1 \geq ...\geq k_{d-3}\geq \lvert k_{d-2}\rvert \}.
\end{equation*}
The one-dimensional functions $C_m^\lambda$ occurring in \eqref{y_n^k def} are the Gegenbauer polynomials which can be defined through
\begin{equation*}
C_m^\lambda(t) = \frac{(-1)^m \Gamma(\lambda+1/2) \Gamma(m+2\lambda)}{2^m\Gamma(m+\lambda+1/2) m!} (1-t^2)^{1/2-\lambda} \frac{\mathrm{d}^{m}}{\mathrm{d}t^{m}}(1-t^2)^{m+\lambda-1/2}.
\end{equation*}
Also, the normalization factor $A_k^n >0$ satisfies
\begin{align}\label{eq10}
(A_k^n)^2 = \frac{2^{(d-4)(d-2)}}{\Gamma\!\left(\frac{d}{2}\right)} \prod_{j=0}^{d-3}\frac{2^{2\lvert k_{j+1}\rvert-j}(k_j-\lvert k_{j+1}\rvert)!(2k_j+d-j-2)\Gamma^2(\frac{d-j-2}{2}+\lvert k_{j+1}\rvert)}{\sqrt{\pi}\Gamma(k_j+\lvert k_{j+1}\rvert+d-j-2)}.
\end{align}

The space $\Pi_N(\mathbb{S}^{d-1}) = \vspan \{ Y_k^{d, n} : n \leq N,\; k \in \mathcal{I}_n^d \}$, consisting of all spherical polynomials of degree $N$ or less, can be decomposed into the spherical harmonic subspaces $\mathcal{H}_n^d = \vspan \{ Y_k^{d, n} : k \in \mathcal{I}_n^d \}$. Namely,
\begin{equation*}
\Pi_N(\mathbb{S}^{d-1}) = \bigoplus_{n=0}^N \mathcal{H}_n^d.
\end{equation*}
Concerning the dimensions of these spaces, we have
\begin{equation*}
\dim \mathcal{H}_{n}^d = \frac{(2n+d-2) (n+d-3)!}{(d-2)! n!}
\end{equation*} 
and
\begin{equation*}
\dim \Pi_N(\mathbb{S}^{d-1}) = \frac{(2N+d-1)(N+d-2)!}{(d-1)!N!}.
\end{equation*}
The well known addition theorem  for spherical harmonics reads
\begin{equation}\label{addition theorem}
\sum_{k \in \mathcal{I}_n^d} Y_k^{d, n}(\eta) \, \overline{Y_k^{d, n}(\nu)} = \frac{2n+d-2}{d-2}\,  C_n^{\frac{d-2}{2}}(\langle \eta, \nu \rangle), \quad \eta, \nu \in \mathbb{S}^{d-1}.
\end{equation}
Lastly, we note that polynomials on the sphere satisfy the product property
\begin{equation*}
f_1 f_2 \in \Pi_{2N}(\mathbb{S}^{d-1}) \quad \text{if } f_1, f_2 \in \Pi_N(\mathbb{S}^{d-1}).
\end{equation*}
I.e., $\Pi_N(\mathbb{S}^{d-1}) \cdot \Pi_N(\mathbb{S}^{d-1}) \subset \Pi_{2N}(\mathbb{S}^{d-1})$.

The sphere $\mathbb{S}^{d-1}$ is intimately connected to the special orthogonal group $SO(d) = \{ g\in \mathbb{R}^{d\times d}: g^\top = g^{-1}, \; \det g =1  \}$, since $\mathbb{S}^{d-1}$ can be identified with the homogeneous space $SO(d)/SO(d-1)$. Here, we identify $SO(d-1)$ with the subgroup of all elements $g\in SO(d)$ satisfying $g e^d = e^d$. Analogously, when viewed as a subset of $SO(d)$, we consider $SO(m)$, $2 \leq m \leq d-1$, to be equal to
\begin{equation*}
\{g \in SO(d) \mid g e^j = e^j  \text{ for } j = m+1, ..., d \}.
\end{equation*}
By $\mu_d$ we denote the invariant Haar measure on $SO(d)$ with the normalization
\begin{equation*}
\int_{SO(d)} \mathrm{d}\mu_d = 1.
\end{equation*}
Each element $g\in SO(d)$ can be written as $g = g_\eta h$ where $h \in SO(d-1)$ and $g_\eta \in SO(d)$ with $g_\eta e^{d} = \eta \in \mathbb{S}^{d-1}$. Of course, this representation is not unique. However, it holds that
\begin{equation}\label{eq2}
\int_{SO(d)} f(g) \, \mathrm{d}\mu_d(g) = \int_{\mathbb{S}^{d-1}} \int_{SO(d-1)} f(g_\eta h)  \, \mathrm{d}\mu_{d-1}(h) \, \mathrm{d}\omega_{d-1}(\eta)
\end{equation}
for any $f \in L^1(SO(d))$. In particular, the integral over $SO(d-1)$ on the right hand sight does not depend on the choice of $g_\eta$ due to the invariance of the Haar measure. In the following, $g_\eta$ will always denote an arbitrary element in $SO(d)$ satisfying $g_\eta e^d = \eta$. More generally, if $\hat{\eta} \in \mathbb{S}^{m}$, $1 \leq m \leq d-1$, then $g_{\hat{\eta}}$ will denote some rotation matrix in $ SO(m+1)\subset SO(d)$ such that $g_{\hat{\eta}}e^{m+1} = (\hat{\eta}, 0) \in \mathbb{R}^{m+1} \times \mathbb{R}^{d-m-1}$.

We now consider the unitary group representation
\begin{equation*}
g \mapsto T^d(g), \quad T^d(g)f(\eta) = f(g^{-1}\eta),
\end{equation*}
of $SO(d)$ on $L^2(\mathbb{S}^{d-1})$. Its subrepresentations on the spaces $\mathcal{H}_n^d$ are irreducible and thus, by the Peter-Weyl theorem, the matrix coefficients
\begin{equation*}
t_{k, m}^{d,n}\colon SO(d) \rightarrow \mathbb{C}, \quad t_{k, m}^{d,n}(g) = \langle T^d(g) Y_m^{d,n}, Y_k^{d,n} \rangle_{\mathbb{S}^{d-1}},
\end{equation*}
form an orthogonal system of $L^2(SO(d))$ with respect to the inner product
\begin{equation*}
\langle f, g \rangle_{SO(d)} = \int_{ SO(d)} f \, \overline{g} \, \mathrm{d}\mu_d.
\end{equation*}
Additionally, it holds that
\begin{equation}\label{eq23}
\int_{SO(d)} \lvert t_{k, m}^{d,n} \rvert^2 \, \mathrm{d}\mu_d = \frac{1}{\dim \mathcal{H}_n^{d}}.
\end{equation}
In the following, we will frequently make use of the relations
\begin{equation}\label{eq6}
t_{k, \ell}^{d, n}(g \tilde{g}) = \sum_{m \in \mathcal{I}_n^d} t_{k, m}^{d, n}(g) \, t_{m, \ell}^{d, n}(\tilde{g}), \quad g, \tilde{g} \in SO(d),
\end{equation}
and
\begin{equation}\label{eq7}
\sqrt{\dim \mathcal{H}_n^d}\, t_{k, 0}^{d, n}(g_\eta) =  \overline{Y_k^{d, n}(\eta)}, \quad \eta \in \mathbb{S}^{d-1}.
\end{equation}
Also, the fact that $\overline{Y_{k}^{d, n}} = Y_{k^-}^{d, n}$, where $(k_1, ..., k_{d-3}, k_{d-2})^-$ is defined as $(k_1, ...,k_{d-3} -k_{d-2})$, immediately implies
\begin{align*}
\overline{t_{k, \ell}^{d, n}(g)} = \langle \overline{T^d(g)Y_\ell^{d, n}}, \overline{Y_k^{d, n}} \rangle_{\mathbb{S}^{d-1}} = \langle T^d(g)Y_{\ell^-}^{d, n}, Y_{k^{-}}^{d, n} \rangle_{\mathbb{S}^{d-1}} = t_{k^-, \ell^-}^{d, n}(g).
\end{align*}
Another important observation is given by the following lemma.
\begin{lemma}\label{lemma2}
For $d \geq 4$ and $h \in SO(d-1)$ it holds that
\begin{equation*}
t_{k, \ell}^{d, n}(h) = \delta_{k_1, \ell_1} \, t_{(k_2, ..., k_{d-2}), (\ell_2, ..., \ell_{d-2})}^{d-1, k_1}(h).
\end{equation*}
If $d=3$, then
\begin{equation}\label{eq22}
t_{k, \ell}^{3, n}(h(\gamma)) = \delta_{k, l} \, \mathrm{e}^{\mathrm{ik\gamma}}, \quad \gamma \in [0, 2\pi),
\end{equation}
where $h(\gamma)\in \mathbb{R}^{3\times 3}$ is a positive rotation by $\gamma$ in the $(x_1, x_2)$-plane.
\end{lemma}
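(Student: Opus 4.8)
The plan is to trace everything back to the explicit formula \eqref{y_n^k def}, using that the last spherical coordinate $\theta_{d-1}$ equals $\dist(\eta,e^d)$ and is therefore fixed by every $h\in SO(d-1)$. Throughout I write $k'=(k_2,\dots,k_{d-2})$, $\ell'=(\ell_2,\dots,\ell_{d-2})$ and split a point $\eta\in\mathbb{S}^{d-1}$ with coordinates $(\theta_1,\dots,\theta_{d-1})$ as $\eta=(\sin\theta_{d-1}\,\xi,\cos\theta_{d-1})$ with $\xi=\eta(\theta_1,\dots,\theta_{d-2})\in\mathbb{S}^{d-2}$. The first ingredient is the factorization
\[
Y_k^{d,n}(\eta)=\Phi_{n,k_1}^d(\theta_{d-1})\,Y_{k'}^{d-1,k_1}(\xi),\qquad \Phi_{n,q}^d(\theta)=c_{d,n,q}\,\sin^q\theta\;C_{n-q}^{\frac{d-2}{2}+q}(\cos\theta),
\]
with a real constant $c_{d,n,q}$ depending only on $(d,n,q)$. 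This is read off \eqref{y_n^k def}: the only factor of the product that involves $\theta_{d-1}$ is the one with $j=0$, and after the reindexing $j\mapsto j+1$ the remaining factors together with $\mathrm{e}^{\mathrm{i}k_{d-2}\theta_1}$ reproduce $Y_{k'}^{d-1,k_1}(\xi)$ up to normalization; moreover \eqref{eq10} shows that the quotient $A_k^n/A_{k'}^{k_1}$ depends only on $(d,n,k_1)$. (This factorization is also implicit in the classical construction; see \citep[Chapter~9]{bib32}.)

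Next, the surface measure splits accordingly: from its coordinate expression, $\mathrm{d}\omega_{d-1}(\eta)=c_d\,\sin^{d-2}\theta_{d-1}\,\mathrm{d}\theta_{d-1}\,\mathrm{d}\omega_{d-2}(\xi)$ with $c_d=\Gamma(d/2)/(\sqrt{\pi}\,\Gamma((d-1)/2))$. An element $h\in SO(d-1)$ fixes $e^d$, hence restricts to some $\tilde h\in SO(d-1)$ acting on $\mathbb{R}^{d-1}$, and $h^{-1}\eta=(\sin\theta_{d-1}\,\tilde h^{-1}\xi,\cos\theta_{d-1})$. Plugging the factorization into $t_{k,\ell}^{d,n}(h)=\int_{\mathbb{S}^{d-1}}Y_\ell^{d,n}(h^{-1}\eta)\,\overline{Y_k^{d,n}(\eta)}\,\mathrm{d}\omega_{d-1}(\eta)$ separates the integral into the product of the ``radial'' integral $c_d\int_0^\pi\Phi_{n,\ell_1}^d(\theta)\,\Phi_{n,k_1}^d(\theta)\,\sin^{d-2}\theta\,\mathrm{d}\theta$ and the integral $\int_{\mathbb{S}^{d-2}}Y_{\ell'}^{d-1,\ell_1}(\tilde h^{-1}\xi)\,\overline{Y_{k'}^{d-1,k_1}(\xi)}\,\mathrm{d}\omega_{d-2}(\xi)=\langle T^{d-1}(\tilde h)Y_{\ell'}^{d-1,\ell_1},Y_{k'}^{d-1,k_1}\rangle_{\mathbb{S}^{d-2}}$. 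The latter vanishes when $k_1\neq\ell_1$, because $\mathcal{H}_{k_1}^{d-1}\perp\mathcal{H}_{\ell_1}^{d-1}$ and $T^{d-1}$ leaves each $\mathcal{H}_q^{d-1}$ invariant, and for $k_1=\ell_1$ it equals $t_{k',\ell'}^{d-1,k_1}(\tilde h)$ by definition. For the radial factor with $k_1=\ell_1$, the same factorization and measure splitting applied to $1=\|Y_k^{d,n}\|_{L^2(\mathbb{S}^{d-1})}$, together with $\|Y_{k'}^{d-1,k_1}\|_{L^2(\mathbb{S}^{d-2})}=1$, force $c_d\int_0^\pi(\Phi_{n,k_1}^d(\theta))^2\sin^{d-2}\theta\,\mathrm{d}\theta=1$. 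Multiplying the two factors gives the claimed identity for $d\geq4$, the element $h$ being identified with its restriction $\tilde h$ under $SO(d-1)\subset SO(d)$.

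For $d=3$ a direct computation is quickest. Then $\mathcal{I}_n^3=\{k\in\mathbb{Z}:|k|\leq n\}$ and \eqref{y_n^k def} reduces to $Y_k^{3,n}(\theta_1,\theta_2)=A_k^n\,C_{n-|k|}^{1/2+|k|}(\cos\theta_2)\,\sin^{|k|}(\theta_2)\,\mathrm{e}^{\mathrm{i}k\theta_1}$. A positive rotation $h(\gamma)$ in the $(x_1,x_2)$-plane acts by $h(\gamma)^{-1}\eta(\theta_1,\theta_2)=\eta(\theta_1+\gamma,\theta_2)$, so $T^3(h(\gamma))Y_\ell^{3,n}=\mathrm{e}^{\mathrm{i}\ell\gamma}\,Y_\ell^{3,n}$; the inner product with $Y_k^{3,n}$ together with orthonormality then gives $t_{k,\ell}^{3,n}(h(\gamma))=\mathrm{e}^{\mathrm{i}\ell\gamma}\,\delta_{k,\ell}=\delta_{k,\ell}\,\mathrm{e}^{\mathrm{i}k\gamma}$.

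I expect the only genuinely delicate point to be the factorization: carrying out the index shift between the $d$- and $(d-1)$-dimensional instances of \eqref{y_n^k def} and verifying through \eqref{eq10} that the normalization quotient collapses to a function of $(d,n,k_1)$ alone. Once the factorization is in hand, separating the integral and evaluating its two factors is routine.
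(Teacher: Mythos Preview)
Your proof is correct and follows essentially the same approach as the paper: both exploit the factorization $Y_k^{d,n}=\Phi_{n,k_1}^d(\theta_{d-1})\,Y_{k'}^{d-1,k_1}$ to reduce $t_{k,\ell}^{d,n}(h)$ for $h\in SO(d-1)$ to a constant times $\delta_{k_1,\ell_1}\,t_{k',\ell'}^{d-1,k_1}(h)$, and the $d=3$ case is handled identically. The only cosmetic difference is that the paper determines the constant by evaluating at $h=\mathrm{id}$, whereas you obtain it from the normalization $\|Y_k^{d,n}\|_{L^2}=1$; both are equally immediate.
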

\begin{proof}
Equation \eqref{eq22} holds, since
\begin{equation*}
T^3(h(\gamma))Y_\ell^{3, n} = \mathrm{e}^{\mathrm{i}\ell \gamma}\, Y_\ell^{3, n}.
\end{equation*}
For  $d \geq 4$ and $k=(k_1, ..., k_{d-2}) \in  \mathcal{I}_{n}^d$ it follows directly from the definition \eqref{y_n^k def} that
\begin{align*}
& Y_k^{d,n}(\theta_1, ..., \theta_{d-1}) =  \frac{A_k^n}{A_{(k_2, ..., k_{d-2})}^{k_1}} C_{n-\lvert k_1 \rvert}^{\frac{d-2}{2}+\lvert k_1 \rvert}(\cos \theta_{d-1}) \\
& \qquad \qquad \qquad \qquad \qquad \qquad \qquad \times \sin^{\lvert k_1 \rvert}(\theta_{d-1})\,  Y_{(k_2, ..., k_{d-2})}^{d-1, \lvert k_1 \rvert }(\theta_1, ..., \theta_{d-2}).
\end{align*}
Consequently, for $h \in SO(d-1)$ we have
\begin{align*}
&T^{d}(h)Y_k^{d,n}(\theta_1, ..., \theta_{d-1}) =  \frac{A_k^n}{A_{(k_2, ..., k_{d-2})}^{\lvert k_1 \rvert }} C_{n-\lvert k_1\rvert }^{\frac{d-2}{2}+\lvert k_1 \rvert}(\cos \theta_{d-1}) \\
& \qquad \qquad \qquad \qquad  \times \sin^{\lvert k_1 \rvert }(\theta_{d-1})\,  T^{d-1}(h)Y_{(k_2, ..., k_{d-2})}^{d-1,\lvert k_1\rvert }(\theta_1, ..., \theta_{d-2}).
\end{align*}
Now, it is easy to see that
\begin{equation*}
t_{k, \ell}^{d, n}(h) = \langle T^d(h)Y_\ell^{d, n}, Y_k^{d, n}\rangle_{\mathbb{S}^{d-1}} =c\, \delta_{k_1, \ell_1} \, t_{(k_2, ..., k_{d-2}), (\ell_2, ..., \ell_{d-2})}^{d-1, k_1}(h)
\end{equation*}
for some constant $c$. Furthermore, by choosing $h$ to be the identity, we deduce that $c$ must be equal to $1$.
\end{proof}

On the rotation groups, we consider the subspaces 
\begin{equation}\label{eq112}
\mathcal{M}_N^1(SO(d)) = \vspan\{ t_{k, m}^{d,n} : n\leq N, \; k, m \in \mathcal{I}_n^d \}, \qquad d\geq 3,
\end{equation}
where the upper index $1$ in \eqref{eq112} refers to the fact that the matrix functions $t_{k, m}^{d, n}$ correspond to class $1$ representations (see \citep{bib32} for more details). Furthermore, we define
\begin{equation*}
\mathcal{M}_N^1(SO(2)) = \Pi_N(SO(2))= \vspan\{\mathrm{e}^{\mathrm{i}k \cdot} :k=-N, ..., N \}.
\end{equation*}
\hyperref[lemma2]{Lemma~\ref*{lemma2}} implies, in particular, that
\begin{equation*}
    f\vert_{SO(m)} \in \mathcal{M}_N^1(SO(m)) \qquad \text{for each }f \in \mathcal{M}_N^1(SO(d)), \; m \in \{2, \dots, d-1\}.
\end{equation*}
For $d \geq 3$, the group $SO(d)$ can be regarded as a subset of $\mathbb{R}^{d^2}$. In that sense, we define $\Pi_N(SO(d))$ as the space of all restrictions of algebraic polynomials of degree $N$ in $d^2$ variables to $SO(d)$. As easily seen from the definition of the matrix functions, it holds that
\begin{equation*}
\mathcal{M}_N^1(SO(d)) \subset \Pi_N(SO(d)).
\end{equation*}
Hence, we obtain a first simple product property
\begin{equation*}
f_1 f_2 \in \Pi_{2N}(SO(d)) \quad \text{if } f_1, f_2 \in \mathcal{M}_N^1(SO(d)).
\end{equation*}
Another result in this direction is given by the following lemma, which establishes a connection between polynomials on the sphere and the rotation group. This property will play an important role in the construction of our wavelet frames.
\begin{lemma}\label{lemma4}
Let $f_1 \in \mathcal{M}_{N_1}^1(SO(d))$, $f_2 \in \mathcal{M}_{N_2}^1(SO(d))$. Then the function
\begin{equation*}
\eta \mapsto \int_{SO(d-1)} f_1(g_\eta h)f_2(g_\eta h) \, \mathrm{d}\mu_{d-1}(h)
\end{equation*}
is contained in $\Pi_{N_1+N_2}(\mathbb{S}^{d-1})$.
\end{lemma}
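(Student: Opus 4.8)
The plan is to lift the integral to a right-$SO(d-1)$-invariant polynomial on $SO(d)$, and then to prove separately that every such invariant polynomial of degree $M$ is the restriction to $\mathbb S^{d-1}$ of a spherical polynomial of degree $M$.

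First I would set $\Psi(g):=\int_{SO(d-1)}f_1(gh)\,f_2(gh)\,\mathrm d\mu_{d-1}(h)$ for $g\in SO(d)$. Right-invariance of the Haar measure on $SO(d-1)$ gives $\Psi(gh_0)=\Psi(g)$ for all $h_0\in SO(d-1)$; since $g_1e^d=g_2e^d$ forces $g_2^{-1}g_1\in SO(d-1)$, the value $\Psi(g)$ depends only on $\eta:=ge^d$, so $\Psi$ descends to a function $\widetilde\Psi$ on $\mathbb S^{d-1}$ with $\widetilde\Psi(ge^d)=\Psi(g)$, and $\widetilde\Psi(\eta)$ is exactly the function in the statement (this also re-derives its independence of the choice of $g_\eta$, and gives $\widetilde\Psi\in L^2(\mathbb S^{d-1})$ via \eqref{eq2}). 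Moreover $\Psi\in\Pi_{N_1+N_2}(SO(d))$: by the product property already recorded, $f_1f_2$ agrees on $SO(d)$ with a polynomial of degree $\le N_1+N_2$ in the $d^2$ matrix entries, and as $(gh)_{ab}=\sum_c g_{ac}h_{cb}$ is linear in the entries of $g$, for each fixed $h$ the map $g\mapsto(f_1f_2)(gh)$ is again such a polynomial; integrating its bounded continuous coefficients over $SO(d-1)$ keeps $\Psi$ in $\Pi_{N_1+N_2}(SO(d))$. It therefore suffices to prove: \emph{if $F\in\Pi_M(SO(d))$ is right-$SO(d-1)$-invariant, then $\widetilde F\in\Pi_M(\mathbb S^{d-1})$.}

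To prove this I would compute the spherical harmonic coefficients of $\widetilde F$. By \eqref{eq7}, $\overline{Y_k^{d,n}(\eta)}=\sqrt{\dim\mathcal H_n^d}\,t_{k,0}^{d,n}(g_\eta)$ for any $g_\eta$ with $g_\eta e^d=\eta$, so $t_{k,0}^{d,n}$ depends only on $g\mapsto ge^d$ and is itself right-$SO(d-1)$-invariant. Hence both $\widetilde F$ and $\overline{Y_k^{d,n}}$, pulled back to $SO(d)$, are constant along $SO(d-1)$-cosets, and \eqref{eq2} gives
\[
\langle\widetilde F,\,Y_k^{d,n}\rangle_{\mathbb S^{d-1}}=\sqrt{\dim\mathcal H_n^d}\;\langle F,\,t_{k,0}^{d,n}\rangle_{SO(d)}.
\]
Thus $\widetilde F\in\Pi_M(\mathbb S^{d-1})$ follows once we know $t_{k,0}^{d,n}\perp\Pi_M(SO(d))$ whenever $n>M$. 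For that I would use the $SO(d)\times SO(d)$-module structure of $L^2(SO(d))$: left and right translations act on $SO(d)$ by linear substitutions of the matrix entries, so $\Pi_M(SO(d))$ is bi-invariant, hence by Peter--Weyl a direct sum of pairwise orthogonal irreducible blocks $V_\pi^{*}\otimes V_\pi$. The functions $t_{k,0}^{d,n}$ lie in the block of $\mathcal H_n^d$, and that whole block lies in $\Pi_M(SO(d))$ iff the single function $t_{0,0}^{d,n}$ does; but by \eqref{eq7} and \eqref{y_n^k def}, $t_{0,0}^{d,n}(g)$ equals a nonzero constant times the zonal harmonic $C_n^{(d-2)/2}(g_{dd})$, a polynomial of degree exactly $n$ in the single entry $g_{dd}$. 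Restricting $g$ to the one-parameter group of rotations in the $(e^1,e^d)$-plane, where $g_{dd}=\cos\theta$ sweeps $[-1,1]$ while every element of $\Pi_M(SO(d))$ becomes a trigonometric polynomial of degree $\le M$ in $\theta$, shows $t_{0,0}^{d,n}\notin\Pi_M(SO(d))$ for $n>M$. So the $\mathcal H_n^d$-block meets $\Pi_M(SO(d))$ trivially and, being a distinct isotypic component, is orthogonal to it, giving $t_{k,0}^{d,n}\perp\Pi_M(SO(d))$ for $n>M$, as required.

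The real obstacle is this last step, i.e.\ obtaining the \emph{sharp} degree $N_1+N_2$ by identifying which $\mathcal H_n^d$ occur inside $\Pi_M(SO(d))$. One can instead argue by induction on $M$ applied to monomials in the entries of $g_\eta$: separating the columns equal to $e^d$ (which contribute factors $\eta_i$) reduces to the case where only the first $d-1$ columns of $g_\eta$ appear, where one invokes the first fundamental theorem of invariant theory for $SO(d-1)$ — the pertinent invariants of those columns being the entries of $I-\eta\eta^{\top}$ and, through the adjugate of $g_\eta$, the coordinates $\pm\eta_i$ — but this needs comparable external input. A third route is a head-on computation: reduce by bilinearity to $f_i=t^{d,n_i}$, expand via \eqref{eq6}, evaluate the $SO(d-1)$-integral using \autoref{lemma2} and Schur orthogonality, collapse the remaining sum by unitarity of the representation matrices, and recognize the outcome through \eqref{eq7} and the addition theorem.
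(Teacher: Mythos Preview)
Your argument is correct and takes a genuinely different route from the paper. The paper proceeds by the head-on computation you sketch at the end: it reduces to $f_1=t_{\ell,k}^{d,n}$, $f_2=\overline{t_{\ell',k'}^{d,n'}}$, uses \eqref{eq6} and \autoref{lemma2} to evaluate the $SO(d-1)$-integral, then writes out the resulting sum explicitly via the factorization of $Y_{(k_1,m)}^{d,n}$ and the addition theorem \eqref{addition theorem}, arriving at an integral over $\mathbb S^{d-1}\times\mathbb S^{d-1}$ of products of Gegenbauer polynomials in $\langle\eta,x\rangle$, $\langle\eta,z\rangle$, $\langle x,z\rangle$; the sharp degree is then extracted by a parity argument showing certain cross-terms vanish. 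Your approach instead proves once and for all that every right-$SO(d-1)$-invariant element of $\Pi_M(SO(d))$ descends to $\Pi_M(\mathbb S^{d-1})$, via the bi-invariance of $\Pi_M(SO(d))$ and the multiplicity-free Peter--Weyl decomposition, reducing the sharp degree to the elementary fact that $t_{0,0}^{d,n}(g)=c\,C_n^{(d-2)/2}(g_{dd})$ has trigonometric degree exactly $n$ on a planar rotation subgroup. Your route is cleaner and yields a reusable general statement; the paper's route is more explicit and makes the structure of the output visible (which is not needed elsewhere in the paper). One small cosmetic slip: by \eqref{eq7} you actually get $\langle\widetilde F,\,Y_k^{d,n}\rangle_{\mathbb S^{d-1}}=\sqrt{\dim\mathcal H_n^d}\int_{SO(d)}F\,t_{k,0}^{d,n}\,\mathrm d\mu_d=\sqrt{\dim\mathcal H_n^d}\,\langle F,\,t_{k^-,0}^{d,n}\rangle_{SO(d)}$ rather than $\langle F,\,t_{k,0}^{d,n}\rangle_{SO(d)}$, but since both $t_{k,0}^{d,n}$ and $t_{k^-,0}^{d,n}$ lie in the same Peter--Weyl block this does not affect the conclusion.
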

\begin{proof}
For $d =3$ the result is well known. Indeed, in this case the matrix functions $t_{l, k}^{3, n}$, often called Wigner $D$-functions, satisfy the product property
\begin{equation*}
    t_{\ell, k}^{3, n} \,  t_{\ell', k'}^{3, n'} \in \mathcal{M}_{n+n'}^1(SO(3)).
\end{equation*}
More details in this direction can be found in \cite{bib20}.

Let $d\geq 4$. Clearly, it suffices to consider the case where $f_1 = t_{\ell, k}^{d, n}$ and $f_2 = \overline{t_{\ell', k'}^{d, n'}}$ with $n \leq N_1$, $n'\leq N_2$. A straightforward computation using \eqref{eq6} and \hyperref[lemma2]{Lemma~\ref*{lemma2}} yields
\begin{align*}
\int_{SO(d-1)} f_1(g_\eta h)f_2(g_\eta h) \, \mathrm{d}\mu_{d-1}(h)   =\frac{\delta_{k, k'}}{\dim \mathcal{H}_{k_1}^{d-1}} \sum_{m \in \mathcal{I}_{k_1}^{d-1}} t_{\ell, (k_1, m)}^{d, n}(g_\eta) \, \overline{t_{\ell', (k_1, m)}^{d, n'}(g_\eta)}.
\end{align*}
Now, write $\eta = (\eta_1, ..., \eta_d)$ and
\begin{equation*}
g_\eta = \begin{pmatrix}
g_{1,1} & g_{1, 2} & \cdots & g_{1, d-1} & \eta_1\\
g_{2,1} & g_{2, 2} & \cdots & g_{2, d-1} & \eta_2 \\
\vdots & \vdots &  & \vdots & \vdots \\
g_{d,1} & g_{d, 2} & \cdots & g_{d, d-1} & \eta_d
\end{pmatrix}, \quad g_\eta' = \begin{pmatrix}
g_{1,1} & g_{1, 2} & \cdots & g_{1, d-1}\\
g_{2,1} & g_{2, 2} & \cdots & g_{2, d-1}\\
\vdots & \vdots &  & \vdots \\
g_{d,1} & g_{d, 2} & \cdots & g_{d, d-1}
\end{pmatrix}.
\end{equation*}
As already discussed in the proof of \hyperref[lemma2]{Lemma~\ref*{lemma2}}, for $x = (x_1, ..., x_d)^\top \in \mathbb{S}^{d-1}$ we have 
\begin{equation*}
Y_{(k_1, m)}^{d, n}(x) = c_{n, k_1} C_{n-k_1}^{\frac{d-2}{2}+k_1}(x_d) (1-x_d^2)^{k_1/2} \,  Y_m^{d-1, k_1}\!\bigg( \frac{(x_1, ..., x_{d-1})^\top}{\sqrt{1-x_d^2}} \bigg),
\end{equation*}
with $c_{n, k_1} = A_{(k_1, m)}^n/A_{m}^{k_1}$ being independent of $m$, 
and therefore
\begin{align*}
&T^d(g_\eta)Y_{(k_1, m)}^{d, n}(x) = Y_{(k_1, m)}^{d, n}(g_\eta^\top x) \\
&\qquad =  c_{n, k_1} C_{n-k_1}^{\frac{d-2}{2}+k_1}(\langle \eta, x \rangle) (1-\langle \eta, x \rangle^2)^{k_1/2}\,  Y_m^{d-1, k_1}\!\bigg( \frac{(g_\eta')^\top x}{\sqrt{1-\langle \eta, x \rangle^2}} \bigg).
\end{align*}
Since $g_\eta$ is an orthogonal matrix, it is easy to see that
\begin{equation*}
\langle (g_\eta')^\top x, (g_\eta')^\top z\rangle = z^\top (I_d - \eta \eta^\top)x = \langle x, z\rangle-\langle \eta, z \rangle \langle \eta, x\rangle.
\end{equation*}
The addition theorem for spherical harmonics \eqref{addition theorem} then shows that
\begin{align*}
&\sum_{m \in \mathcal{I}_{k_1}^{d-1}}Y_m^{d-1, k_1}\!\bigg( \frac{(g_\eta')^\top x}{\sqrt{1-\langle \eta, x \rangle^2}}\bigg) \overline{Y_m^{d-1, k_1}\!\bigg( \frac{(g_\eta')^\top z}{\sqrt{1-\langle \eta, z \rangle^2}}\bigg)} \\
&\qquad  \qquad \qquad \qquad = \frac{2k_1 + d-3}{d-3} C_{k_1}^{\frac{d-3}{2}}\!\bigg( \frac{\langle x, z\rangle-\langle \eta, z \rangle \langle \eta, x\rangle}{\sqrt{1-\langle \eta, x \rangle^2} \sqrt{1-\langle \eta, z \rangle^2}} \bigg).
\end{align*}
Hence, combining all the above, we deduce that
\begin{align*}
& \int_{SO(d-1)} f_1(g_\eta h)f_2(g_\eta h) \, \mathrm{d}\mu_{d-1}(h) = c \int_{\mathbb{S}^{d-1}}\int_{\mathbb{S}^{d-1}} \overline{Y_\ell^{d, n}(x)}Y_{\ell'}^{d, n'}(z)\\
& \qquad \quad \qquad  \times C_{n-k_1}^{\frac{d-2}{2}+k_1}(\langle \eta, x \rangle)C_{n'-k_1}^{\frac{d-2}{2}+k_1}(\langle \eta, z \rangle) (1-\langle \eta, x \rangle^2)^{k_1/2}(1-\langle \eta, z \rangle^2)^{k_1/2} \\
& \qquad \quad \qquad \times C_{k_1}^{\frac{d-3}{2}}\!\bigg( \frac{\langle x, z\rangle-\langle \eta, z \rangle \langle \eta, x\rangle}{\sqrt{1-\langle \eta, x \rangle^2} \sqrt{1-\langle \eta, z \rangle^2}} \bigg)\, \mathrm{d}\omega_{d-1}(x)\, \mathrm{d}\omega_{d-1}(z),
\end{align*}
for some constant $c$ depending on $n, n', k, k'$. Using the fact that $C_{k_1}^{\frac{d-3}{2}}$ is a one-dimensional polynomial of degree $k_1$, we have
\begin{equation*}
C_{k_1}^{\frac{d-3}{2}}\!\bigg( \frac{\langle x, z\rangle-\langle \eta, z \rangle \langle \eta, x\rangle}{\sqrt{1-\langle \eta, x \rangle^2} \sqrt{1-\langle \eta, z \rangle^2}} \bigg) = \sum_{j=0}^{k_1} c_j \bigg( \frac{\langle x, z\rangle-\langle \eta, z \rangle \langle \eta, x\rangle}{\sqrt{1-\langle \eta, x \rangle^2} \sqrt{1-\langle \eta, z \rangle^2}} \bigg)^j
\end{equation*}
for certain coefficients $c_j$. Furthermore, 
\begin{equation*}
(\langle x, z\rangle-\langle \eta, z \rangle \langle \eta, x\rangle)^j = \sum_{i=0}^j(-1)^i \binom{j}{i}\langle x, z\rangle^{j-i} \langle \eta, z\rangle^i \langle \eta, x \rangle^i
\end{equation*}
as well as
\begin{equation*}
\langle x, z\rangle^{j-i} = \sum_{\vert \alpha \rvert = j-i} \tilde{c}_\alpha \,x^\alpha z^\alpha,
\end{equation*}
where the above sum ranges over all $\alpha = (\alpha_1, ..., \alpha_d)\in \mathbb{N}_0^d$ with $\lvert \alpha \rvert = \alpha_1 + ...+ \alpha_d=j-i$. It follows that 
\begin{align*}
\int_{SO(d-1)} f_1(g_\eta h)f_2(g_\eta h) \, \mathrm{d}\mu_{d-1}(h)    =\sum_{j=0}^{k_1} \sum_{i=0}^j \sum_{\vert \alpha \rvert = j-i} c_{j, i, \alpha} \, F_{j, i, \beta}(n, \ell, \eta) \overline{F_{j, i, \alpha}(n', \ell', \eta) }
\end{align*}
with
\begin{align}
F_{j, i, \alpha}(n, \ell, \eta ) = \int_{\mathbb{S}^{d-1}} \overline{Y_{\ell}^{d, n}(x)} \,x^\alpha  \,C_{n-k_1}^{\frac{d-2}{2}+k_1}(\langle \eta, x \rangle) \langle \eta, x\rangle^i (1-\langle \eta, x \rangle^2)^{(k_1-j)/2}\, \mathrm{d}\omega_{d-1}(x).
\end{align}
Hence, in order to complete the proof it suffices to show that $F_{j, i, \alpha}(n, \ell, \cdot ) \in \Pi_{n}(\mathbb{S}^{d-1})$. First, we note that the latter is obvious if $k_1-j$ is even. Otherwise, if $k_1-j$ is odd, we will show that $F_{j, i, \alpha}(n, \ell, \cdot ) \equiv 0$. Clearly, the function $x \mapsto \overline{Y_{\ell}^{d, n}(x)} x^\alpha $ is a homogeneous polynomial of degree $n + \lvert \alpha \rvert$ and thus, considered as a function on the sphere, can be expanded into spherical harmonics whose degrees have the same parity as $n+\lvert \alpha \rvert$ (see e.g.\ \citep[Theorem~1.1.3]{bib3}). Similarly, by the symmetry $C_m^{\frac{d-2}{2}}(-t)=(-1)^mC_m^{\frac{d-2}{2}}(t)$, it holds that
\begin{equation*}
C_{n-k_1}^{\frac{d-2}{2}+k_1}(t)t^i (1-t^2)^{(k_1-j)/2} = \sum_{m=0}^\infty c_m \, C_m^{\frac{d-2}{2}}(t)
\end{equation*}
where the Gegenbauer-expansion on the right hand sight converges in the Hilbert space sense and $c_m = 0$ if $m$ has the same parity as $n+i-k_1+1$. Consequently, the corresponding spherical function, obtained by setting $t=\langle \eta, x \rangle$, has an expansion in terms of spherical harmonics whose degree is of the same parity as $n+i-k_1$. Also, by the assumption that $k_1-j$ is odd, $n+i-k_1$ has the same parity as $n+j-i +1$. It therefore follows that $F_{j, i, \alpha}(n, \ell, \cdot ) \equiv 0$.
\end{proof}

We conclude this section, by introducing the subspaces
\begin{equation*}
\mathcal{M}_{N, M}^1(SO(d)) = \vspan\{ t_{k, m}^{d,n} : n\leq N, \; k \in \mathcal{I}_n^d, \; m \in \mathcal{I}_{\min(n, M)}^d \},
\end{equation*}
which will play an important role in the remainder of this article. Note that $\mathcal{M}_{N, M}^1(SO(d))$ $=\mathcal{M}_{N}^1(SO(d))$ if $M\geq N$. While the above definition is convenient when constructing polynomial frames in terms of the specific basis functions $Y_k^{d, n}$ given in \eqref{y_n^k def}, the subspaces $\mathcal{M}_{N, M}^1(SO(d))$ can also be formulated in a coordinate independent way. Namely, we have the following statement.
\begin{proposition}\label{lemma2.3}
Let $f \in \mathcal{M}_{N}^1(SO(d))$. Then $f \in \mathcal{M}_{N, M}^1(SO(d))$ if and only if for each $g \in SO(d)$ the map $h \mapsto f(g h)$ is contained in $\mathcal{M}_{M}^1(SO(d-1))$.
\end{proposition}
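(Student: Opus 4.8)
The plan is to write $f$ in the matrix-coefficient basis, translate it on the left by $g$ with the composition rule \eqref{eq6}, restrict the result to $SO(d-1)$ using Lemma~\ref{lemma2}, and then extract the claimed characterisation from the linear independence (Peter--Weyl orthogonality) of the matrix coefficients, first on $SO(d-1)$ and then on $SO(d)$.

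More precisely, since $f \in \mathcal{M}_N^1(SO(d))$ I would write $f = \sum_{n \leq N} \sum_{k,m \in \mathcal{I}_n^d} c_{k,m}^n \, t_{k,m}^{d,n}$ with uniquely determined coefficients $c_{k,m}^n$, and then, for $g \in SO(d)$ and $h \in SO(d-1)$, use \eqref{eq6} to obtain
\[
f(gh) = \sum_{n \leq N} \sum_{k,m \in \mathcal{I}_n^d} c_{k,m}^n \sum_{j \in \mathcal{I}_n^d} t_{k,j}^{d,n}(g) \, t_{j,m}^{d,n}(h) .
\]
Next I would restrict $h$ to $SO(d-1)$ and apply Lemma~\ref{lemma2}. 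For $d \geq 4$ it gives $t_{j,m}^{d,n}(h) = \delta_{j_1, m_1} \, t_{(j_2,\dots,j_{d-2}),\,(m_2,\dots,m_{d-2})}^{d-1, m_1}(h)$, so only the terms with $j_1 = m_1$ remain and, for those, the tail $(j_2,\dots,j_{d-2})$ ranges over $\mathcal{I}_{m_1}^{d-1}$. Writing $m = (p,\tilde{m})$ with $p = m_1$ and collecting terms by the $SO(d-1)$-data $(p,\tilde{\jmath},\tilde{m})$ with $\tilde{\jmath},\tilde{m} \in \mathcal{I}_p^{d-1}$, I expect to arrive at
\[
f(gh)\big|_{SO(d-1)} = \sum_{p=0}^{N} \; \sum_{\tilde{\jmath},\tilde{m} \in \mathcal{I}_p^{d-1}} \Bigl( \sum_{n \geq p} \sum_{k \in \mathcal{I}_n^d} c_{k,(p,\tilde{m})}^n \, t_{k,(p,\tilde{\jmath})}^{d,n}(g) \Bigr) \, t_{\tilde{\jmath},\tilde{m}}^{d-1,p}(h) ,
\]
which is exactly the Peter--Weyl expansion of $h \mapsto f(gh)$ on $SO(d-1)$, with the bracket being the coefficient of $t_{\tilde{\jmath},\tilde{m}}^{d-1,p}$. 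The case $d = 3$ should go through identically via \eqref{eq22}, with $\mathcal{M}_M^1(SO(2)) = \vspan\{\mathrm{e}^{\mathrm{i}k \cdot} : \lvert k\rvert \leq M\}$, the role of $p$ taken by $\lvert m\rvert$, and $t_{\tilde{\jmath},\tilde{m}}^{d-1,p}(h)$ replaced by $\mathrm{e}^{\mathrm{i}m\gamma}$.

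With this identity in hand both implications are short. If $f \in \mathcal{M}_{N,M}^1(SO(d))$, then $c_{k,m}^n = 0$ whenever $m_1 > M$, so only the brackets with $p \leq M$ can be non-zero and therefore $h \mapsto f(gh) \in \mathcal{M}_M^1(SO(d-1))$ for every $g$. Conversely, if $h \mapsto f(gh) \in \mathcal{M}_M^1(SO(d-1))$ for every $g$, then for each $p > M$ and each $\tilde{\jmath},\tilde{m} \in \mathcal{I}_p^{d-1}$ the function $g \mapsto \sum_{n \geq p} \sum_{k \in \mathcal{I}_n^d} c_{k,(p,\tilde{m})}^n \, t_{k,(p,\tilde{\jmath})}^{d,n}(g)$ must vanish identically on $SO(d)$; because the $t_{k,\ell}^{d,n}$ are orthogonal, and hence linearly independent, in $L^2(SO(d))$, this forces $c_{k,(p,\tilde{m})}^n = 0$ for all $n \geq p$ and all $k$. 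Letting $p > M$ and $\tilde{m}$ vary then shows that every coefficient $c_{k,m}^n$ with $m_1 > M$ vanishes, i.e.\ $f \in \mathcal{M}_{N,M}^1(SO(d))$.

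The one point that will require care is the index bookkeeping in the middle step: the Kronecker symbol $\delta_{j_1,m_1}$ in Lemma~\ref{lemma2} means that, after the left translation by $g$, the $SO(d-1)$-block decomposition is governed by the first component of the \emph{second} multi-index $m$ of $t_{k,m}^{d,n}$ --- which is precisely the index that is restricted in the definition of $\mathcal{M}_{N,M}^1(SO(d))$ --- and one has to use linear independence on $SO(d)$ itself, not merely on $SO(d-1)$, to conclude from the vanishing of the coefficient functions $g \mapsto \sum_{n,k}(\cdots)\,t_{k,(p,\tilde{\jmath})}^{d,n}(g)$ that the individual numbers $c_{k,m}^n$ vanish. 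The remaining manipulations are routine.
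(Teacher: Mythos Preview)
Your proposal is correct and follows essentially the same route as the paper: expand $f$ in matrix coefficients, apply the composition rule \eqref{eq6} together with Lemma~\ref{lemma2}, and read off the claim from the resulting expansion in the functions $t^{d-1,p}_{\tilde{\jmath},\tilde{m}}$. The paper stops after writing down the analogue of your displayed identity and declares the conclusion obvious, whereas you spell out the two implications via the Peter--Weyl orthogonality on $SO(d-1)$ and on $SO(d)$; this extra detail is welcome but does not constitute a different argument.
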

\begin{proof}
Since $f \in \mathcal{M}_{N}^1(SO(d))$, we have
\begin{equation*}
f = \sum_{n=0}^N \sum_{\ell, k \in \mathcal{I}_n^d} c_{\ell, k}^n \, t_{\ell, k}^{d, n}
\end{equation*}
for some coefficients $c_{\ell, k}^n \in \mathbb{C}$. Thus, using \eqref{eq6} and \hyperref[lemma2]{Lemma~\ref*{lemma2}}, we obtain
\begin{equation*}
f(g h) = \sum_{n=0}^N \sum_{\ell, k \in \mathcal{I}_n^d} c_{\ell, k}^n \sum_{m \in \mathcal{I}_{k_1}^{d-1}}t_{\ell, (k_1, m) }^{d, n}(g) \, t_{m, (k_2, ..., k_{d-2})}^{d-1, k_1}(h).
\end{equation*}
Now, the statement is obvious.
\end{proof}

\section{Constructing directional frames}\label{sec3}
In this section, we construct a large class of polynomial frames for $L^2(\mathbb{S}^{d-1})$. We start by defining an initial sequence of polynomial wavelets 
\begin{equation*}
\Psi_{\scriptscriptstyle K}^j\in \Pi_{2^j}(\mathbb{S}^{d-1}), \quad j \in \mathbb{N}_0,
\end{equation*}
where $K$ is a fixed directionality parameter. For large scales $j$, these wavelets are well localized in space, as we will discuss in more detail in \autoref{sec4}. The frames we construct in this section consist of functions of the form $T^d(g_{j, \ell})\Psi_{\scriptscriptstyle K}^j$, i.e., of rotated versions of the initial wavelets. The points $g_{j, \ell}\in SO(d)$ correspond to quadrature formulas for suitable polynomial subspaces of $L^2(SO(d))$.

Let $\phi \in C^q([0, \infty))$, $q \in \mathbb{N}_0$, be a non-increasing function with $\phi(t)=1$ for $t\in [0, 1/2]$ and $\phi(t)=0$ for $t\geq 1$. We define $\kappa \in C^q([0, \infty))$ by
\begin{equation*}
\kappa(t)=\sqrt{\phi^2(t/2) - \phi^2(t)}.
\end{equation*}
It follows that $\supp(\kappa) \subset [1/2, 2]$ and
\begin{equation}\label{eqkappa}
\phi^2(t) + \kappa^2(t) = \phi^2(t/2).
\end{equation}
Now, for $j \in \mathbb{N}_0$, we consider the scaling function
\begin{equation}\label{eq25}
\Phi^j = \sum_{n=0}^{\infty}\sum_{k  \in \mathcal{I}_n^d} \phi^2\!\left(\frac{n}{2^j} \right) \overline{Y_k^{d, n}(e^d)} \, Y_k^{d, n}.
\end{equation}
Note that the addition theorem \eqref{addition theorem} yields an alternative representation in terms of Gegenbauer polynomials. Namely,
\begin{equation*}
\Phi^j (\eta) = \sum_{n=0}^\infty \frac{2n+d-2}{d-2} \, \phi^2\!\left(\frac{n}{2^j} \right) C_n^{\frac{d-2}{2}}(\langle \eta, e^d\rangle).
\end{equation*}
The isotropic scaling function $\Phi^j$ is later used to capture the low-frequency content of the signals under consideration. For the analysis at finer scales we introduce the wavelets
\begin{equation}\label{waveletdef}
\Psi_{\scriptscriptstyle K}^{j} = \sum_{n=0}^{\infty}\sum_{k\in  \mathcal{I}_{n}^d} \sqrt{\dim \mathcal{H}_n^d} \, \kappa\!\left(\frac{n}{2^{j-1}} \right) \zeta_{ K, k}^{d,n} \,  Y_k^{d,n}, \quad j \in \mathbb{N}.
\end{equation}
Here, the parameter $K\in \mathbb{N}_0$ is a fixed predefined value, which puts restrictions onto the directionality components $\zeta_{ K, k}^{d,n}$. Namely, we require that $\zeta_{ K, k}^{d,n}$ is independent of $n$ if $n \geq K$,  in which case we will simply write $\zeta_{K, k}^d$ instead of $\zeta_{ K, k}^{d,n}$, as well as $\zeta_{ K, k}^{d, n} = 0$ whenever $k=(k_1, k_2,..., k_{d-2})$ with $k_1 >K$. Additionally, we assume that
\begin{equation}\label{eq1}
\sum_{k\in  \mathcal{I}_{n}^d} \lvert \zeta_{ K, k}^{d,n}\rvert^2 = 1, \quad n \in \mathbb{N}.
\end{equation}
In the special case $K=0$ we obtain isotropic wavelets of the form
\begin{equation*}
\Psi_{\scriptscriptstyle 0}^j (\eta) = \sum_{n=0}^\infty \frac{2n+d-2}{d-2} \kappa\!\left(\frac{n}{2^{j-1}} \right) C_n^{\frac{d-2}{2}}(\langle \eta, e^d \rangle),
\end{equation*}
which have been extensively investigated in the literature \citep{bib3, bib11, bib14, bib15, bib16, bib30, bib35, bib36, bib37}. For $K>0$, the resulting wavelets can exhibit more complex directional structures, which makes them suitable for direction sensitive analyses of local features. Indeed, for $d=3$, where the above construction yields the directional wavelets studied in \citep{bib21, bib10, bib9}, we have previously demonstrated their ability to identify edges and higher order singularities in terms of their position and orientation \citep{bib26, bib31}.

Apart from the mentioned restrictions, the values $\zeta_{ K, k}^{d, n}$ can be chosen freely. However, in \autoref{sec6} we will propose specific choices that maximize the directional sensitivity of the corresponding wavelets. It is also worth mentioning that condition \eqref{eq1} alone is sufficient to ensure that our construction yields tight frames. As we will show later, the remaining assumptions on the directionality component have two main implications. On the one hand, the resulting wavelets are well localized in space. On the other hand, the amount of samples needed to exactly reconstruct band-limited signals can be greatly reduced. Indeed, while the discretized anisotropic wavelet transform in general requires the sampling of polynomials on $SO(d)$, in our setting the amount of quadrature points needed is comparable to the isotropic case where only polynomials on $\mathbb{S}^{d-1}$ occur. Closely related to this is the fact that our wavelets are steerable, a property which will be discussed at the end of \autoref{sec4}.

For any integer $j \in \mathbb{N}$, let $\eta_{j, \ell} \in \mathbb{S}^{d-1}$, $\ell = 1, ..., s_j$, be quadrature points with non-negative weights $\omega_{j, \ell}$ such that
\begin{equation}\label{eq15}
\sum_{\ell =1}^{s_j} \omega_{j, \ell} \, f(\eta_{j, \ell}) =\int_{\mathbb{S}^{d-1}} f \, \mathrm{d}\omega_{d-1}  \quad \text{for all } f \in \Pi_{2^{j+1}}(\mathbb{S}^{d-1}).
\end{equation}
Furthermore, let $h_{j,m} \in SO(d-1)$ and $\mu_{j, m}>0$ be given for $m = 1, ..., r_j$ with
\begin{equation}\label{eq14}
\sum_{m =1}^{r_j} \mu_{j,m} \, f(h_{j, m}) =\int_{SO(d-1)} f \, \mathrm{d}\mu_{d-1} \quad \text{for all } f \in \Pi_{2\min(K,2^j)}(SO(d-1)).
\end{equation}
Note that $h_{j, m}$ and $\mu_{j, m}$ can be chosen independent of the scale $j$ for $2^j\geq K$, and henceforth we will assume that they are. It will be convenient to define $r_0 = s_0=1$. Combining \eqref{eq15} and \eqref{eq14}, we obtain exact quadrature formulas for the products of polynomials in $\mathcal{M}_{2^j, K}^1(SO(d))$ introduced in \autoref{sec2}.
\begin{lemma}\label{lemma6}
Let $j\in \mathbb{N}$ and $f_1, f_2 \in \mathcal{M}_{2^j, K}(SO(d))$. Then we have
\begin{equation*}
\sum_{\ell=1}^{s_{j}}\sum_{m=1}^{r_j} \omega_{j, \ell} \, \mu_{j, m} \, f_1(g_{\eta_{j, \ell}} h_{j, m})  \, f_2(g_{\eta_{j, \ell}} h_{j, m}) = \int_{SO(d)} f_1 f_2\, \mathrm{d}\mu_{d}.
\end{equation*}
\end{lemma}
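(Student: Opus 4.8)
The plan is to compute $\int_{SO(d)} f_1 f_2\,\mathrm{d}\mu_d$ by writing it as an iterated integral over $SO(d-1)$ and $\mathbb{S}^{d-1}$ via \eqref{eq2}, and then to apply the two quadrature rules \eqref{eq14} and \eqref{eq15} one after the other, using \hyperref[lemma2.3]{Proposition~\ref*{lemma2.3}} and \hyperref[lemma4]{Lemma~\ref*{lemma4}} to supply the degree bounds that make each step legitimate.

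First I would fix $\eta \in \mathbb{S}^{d-1}$ and examine the slice $h \mapsto f_i(g_\eta h)$. Since $\mathcal{M}_{2^j, K}^1(SO(d)) = \mathcal{M}_{2^j, \min(2^j, K)}^1(SO(d))$, \hyperref[lemma2.3]{Proposition~\ref*{lemma2.3}} shows that $h \mapsto f_i(g_\eta h) \in \mathcal{M}_{\min(2^j,K)}^1(SO(d-1))$ for $i \in \{1,2\}$. By the product property for these spaces, applied on $SO(d-1)$, the function $h \mapsto f_1(g_\eta h)\, f_2(g_\eta h)$ then lies in $\Pi_{2\min(K, 2^j)}(SO(d-1))$, which is exactly the exactness range of \eqref{eq14}. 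Hence, for every $\eta$,
\begin{equation*}
\sum_{m=1}^{r_j} \mu_{j,m}\, f_1(g_\eta h_{j,m})\, f_2(g_\eta h_{j,m}) = \int_{SO(d-1)} f_1(g_\eta h)\, f_2(g_\eta h)\, \mathrm{d}\mu_{d-1}(h);
\end{equation*}
in particular the left-hand side is independent of the chosen representative $g_\eta$, since (by the invariance of the Haar measure, cf.\ \eqref{eq2}) the right-hand side is.

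Next I would invoke \hyperref[lemma4]{Lemma~\ref*{lemma4}} with $N_1 = N_2 = 2^j$: it tells us that $\eta \mapsto \int_{SO(d-1)} f_1(g_\eta h) f_2(g_\eta h)\,\mathrm{d}\mu_{d-1}(h)$ lies in $\Pi_{2^{j+1}}(\mathbb{S}^{d-1})$. By the identity displayed above, the map $\eta \mapsto \sum_{m} \mu_{j,m}\, f_1(g_\eta h_{j,m})\, f_2(g_\eta h_{j,m})$ coincides with it, hence is a spherical polynomial of degree at most $2^{j+1}$, and is therefore integrated exactly by the quadrature \eqref{eq15}. Combining the two quadratures,
\begin{align*}
\sum_{\ell=1}^{s_j}\sum_{m=1}^{r_j} \omega_{j,\ell}\,\mu_{j,m}\, f_1(g_{\eta_{j,\ell}} h_{j,m})\, f_2(g_{\eta_{j,\ell}} h_{j,m})
&= \int_{\mathbb{S}^{d-1}} \sum_{m=1}^{r_j} \mu_{j,m}\, f_1(g_\eta h_{j,m})\, f_2(g_\eta h_{j,m})\,\mathrm{d}\omega_{d-1}(\eta) \\
&= \int_{\mathbb{S}^{d-1}}\int_{SO(d-1)} f_1(g_\eta h)\, f_2(g_\eta h)\,\mathrm{d}\mu_{d-1}(h)\,\mathrm{d}\omega_{d-1}(\eta),
\end{align*}
and the last expression equals $\int_{SO(d)} f_1 f_2\,\mathrm{d}\mu_d$ by \eqref{eq2}, which finishes the argument.

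I do not expect a genuine obstacle: the two auxiliary results \hyperref[lemma2.3]{Proposition~\ref*{lemma2.3}} and \hyperref[lemma4]{Lemma~\ref*{lemma4}} are tailored precisely for this situation. The points needing a little care are the degree bookkeeping — checking that the $SO(d-1)$-degree $2\min(K,2^j)$ of a slice matches the exactness range in \eqref{eq14}, and that the spherical degree $2^j + 2^j = 2^{j+1}$ produced by \hyperref[lemma4]{Lemma~\ref*{lemma4}} matches the exactness range in \eqref{eq15} — together with the observation that the inner sum does not depend on the choice of $g_\eta$, so that substituting $g_\eta = g_{\eta_{j,\ell}}$ in the outer sum is legitimate.
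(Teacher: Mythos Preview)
Your proposal is correct and follows essentially the same route as the paper: decompose the integral via \eqref{eq2}, use \hyperref[lemma2.3]{Proposition~\ref*{lemma2.3}} together with the product property to justify the inner quadrature \eqref{eq14}, then use \hyperref[lemma4]{Lemma~\ref*{lemma4}} to justify the outer quadrature \eqref{eq15}. The paper's proof is terser but invokes exactly the same ingredients; it also remarks that \eqref{eq14} could be weakened to exactness on products $f_1 f_2$ with $f_1, f_2 \in \mathcal{M}_{\min(K,2^j)}^1(SO(d-1))$, which is a side observation rather than a different argument.
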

\begin{proof}
The statement follows easily by utilizing \eqref{eq2} as well as \hyperref[lemma4]{Lemma~\ref*{lemma4}} and \hyperref[lemma2.3]{Proposition~\ref*{lemma2.3}}. Indeed, \eqref{eq14} can be replaced by the weaker condition
\begin{equation*}
\sum_{m =1}^{r_j} \mu_{j,m} \, f_1(h_{j, m}) f_2(h_{j, m}) =\int_{SO(d-1)} f_1 f_2 \, \mathrm{d}\mu_{d-1} \quad \text{if } f_1, f_2 \in \mathcal{M}_{\min(K,2^j)}^1(SO(d-1)).
\end{equation*}
\end{proof}

By the above construction of $\Psi_{\scriptscriptstyle K}^{j}$, the function $g\mapsto T^d(g)\Psi_{\scriptscriptstyle K}^{j}(\eta)$ is an element of $\mathcal{M}_{2^j, K}^1(SO(d))$ for any $\eta \in \mathbb{S}^{d-1}$. Indeed, we have
\begin{equation}\label{eq102}
T^d(g)\Psi_{\scriptscriptstyle K}^{j} = \sum_{n=0}^\infty \sum_{k, k'\in  \mathcal{I}_{n}^d} \sqrt{\dim \mathcal{H}_n^d} \, \kappa\!\left(\frac{n}{2^{j-1}} \right) \zeta_{K, k'}^{d,n}\, t_{k, k'}^{d, n}(g)  Y_k^{d,n}.
\end{equation}
Consequently, the mapping $g \mapsto \langle f, T^d(g)\Psi_{\scriptscriptstyle K}^{j} \rangle_{\mathbb{S}^{d-1}}$ is also contained in the polynomial subspace $\mathcal{M}_{2^j, K}^1(SO(d))$, whenever $f \in L^1(\mathbb{S}^{d-1})$. Hence, \hyperref[lemma6]{Lemma~\ref*{lemma6}} can be applied to the functions
\begin{equation*}
g \mapsto \langle f, T^d(g)\Psi_{\scriptscriptstyle K}^{j} \rangle_{\mathbb{S}^{d-1}} \, T^d(g)\Psi_{\scriptscriptstyle K}^{j}(\eta)
\end{equation*}
and
\begin{equation*}
g \mapsto \lvert\langle f, T^d(g)\Psi_{\scriptscriptstyle K}^{j} \rangle_{\mathbb{S}^{d-1}}\rvert^2
\end{equation*}
on $SO(d)$.

We can now define the fully discretized directional wavelets
\begin{equation*}
\Psi_{\scriptscriptstyle K}^{ 0, 1, 1} \equiv  1, \quad \Psi_{\scriptscriptstyle K}^{j, \ell, m} = \sqrt{\omega_{j, \ell}\, \mu_{j,m}} \, T^d(g_{\eta_{j, \ell}} h_{j,m})\Psi_{\scriptscriptstyle K}^{j}, \quad j \geq 1,
\end{equation*}
which form a tight frame for $L^2(\mathbb{S}^{d-1})$ as the following proposition states.
\begin{proposition}
Let $f \in L^2(\mathbb{S}^{d-1})$. Then
\begin{equation*}
\sum_{j=0}^\infty \sum_{\ell=1}^{s_{j}} \sum_{m=1}^{r_j} \lvert \langle f,\Psi_{\scriptscriptstyle K}^{j, \ell, m}\rangle_{\mathbb{S}^{d-1}}\rvert^2 = \int_{\mathbb{S}^{d-1}} \lvert f \rvert^2 \, \mathrm{d}\omega_{d-1}.
\end{equation*}
\end{proposition}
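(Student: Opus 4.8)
\emph{Proof plan.} The plan is to verify the identity scale by scale: at each scale $j\geq 1$ the exact product quadrature of \hyperref[lemma6]{Lemma~\ref*{lemma6}} turns the finite sum over $(\ell,m)$ into an integral over $SO(d)$, that integral is evaluated by Peter--Weyl orthogonality, and the sum over $j$ collapses by telescoping. First I would dispose of the coarse term: since $r_0=s_0=1$ and $\Psi_{\scriptscriptstyle K}^{0,1,1}\equiv 1=Y_0^{d,0}$, the $j=0$ contribution is simply $\lvert\langle f,Y_0^{d,0}\rangle_{\mathbb{S}^{d-1}}\rvert^2$, the squared degree-zero spherical-harmonic coefficient of $f$.

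Now fix $j\geq 1$ and set $F_j(g)=\langle f,T^d(g)\Psi_{\scriptscriptstyle K}^{j}\rangle_{\mathbb{S}^{d-1}}$. As observed right after \eqref{eq102}, $F_j\in\mathcal{M}_{2^j,K}^1(SO(d))$, and since this space is closed under complex conjugation (using $\overline{t_{k,\ell}^{d,n}}=t_{k^-,\ell^-}^{d,n}$), the same holds for $\overline{F_j}$; hence $\lvert F_j\rvert^2=F_j\overline{F_j}$ is a product of two elements of $\mathcal{M}_{2^j,K}^1(SO(d))$ and \hyperref[lemma6]{Lemma~\ref*{lemma6}} applies. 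Recalling that $\Psi_{\scriptscriptstyle K}^{j,\ell,m}=\sqrt{\omega_{j,\ell}\mu_{j,m}}\,T^d(g_{\eta_{j,\ell}}h_{j,m})\Psi_{\scriptscriptstyle K}^{j}$, this yields
\begin{equation*}
\sum_{\ell=1}^{s_j}\sum_{m=1}^{r_j}\lvert\langle f,\Psi_{\scriptscriptstyle K}^{j,\ell,m}\rangle_{\mathbb{S}^{d-1}}\rvert^2=\sum_{\ell,m}\omega_{j,\ell}\mu_{j,m}\,\lvert F_j(g_{\eta_{j,\ell}}h_{j,m})\rvert^2=\int_{SO(d)}\lvert F_j(g)\rvert^2\,\mathrm{d}\mu_d(g).
\end{equation*}

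To evaluate the integral I would write $f=\sum_{n,k}\hat f_{n,k}Y_k^{d,n}$ with $\hat f_{n,k}=\langle f,Y_k^{d,n}\rangle_{\mathbb{S}^{d-1}}$ and insert \eqref{eq102}, obtaining the finite sum $F_j(g)=\sum_{n,k,k'}\sqrt{\dim\mathcal{H}_n^d}\,\kappa(n/2^{j-1})\,\overline{\zeta_{K,k'}^{d,n}}\,\overline{t_{k,k'}^{d,n}(g)}\,\hat f_{n,k}$ (finite because $\supp\kappa\subset[1/2,2]$). Expanding $\lvert F_j\rvert^2$ and integrating term by term, the orthogonality of the matrix coefficients on $L^2(SO(d))$ together with \eqref{eq23} annihilates every off-diagonal term and leaves
\begin{equation*}
\int_{SO(d)}\lvert F_j\rvert^2\,\mathrm{d}\mu_d=\sum_{n}\kappa^2\!\left(\tfrac{n}{2^{j-1}}\right)\Big(\sum_{k\in\mathcal{I}_n^d}\lvert\hat f_{n,k}\rvert^2\Big)\Big(\sum_{k'\in\mathcal{I}_n^d}\lvert\zeta_{K,k'}^{d,n}\rvert^2\Big)=\sum_{n\geq 1}\kappa^2\!\left(\tfrac{n}{2^{j-1}}\right)\sum_{k\in\mathcal{I}_n^d}\lvert\hat f_{n,k}\rvert^2,
\end{equation*}
where the normalization \eqref{eq1} cancels the $\zeta$-factor and $\kappa(0)=0$ eliminates $n=0$.

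It remains to sum over $j$. Since every summand is non-negative, Tonelli's theorem allows interchanging the order of summation, so together with the $j=0$ term
\begin{equation*}
\sum_{j=0}^\infty\sum_{\ell,m}\lvert\langle f,\Psi_{\scriptscriptstyle K}^{j,\ell,m}\rangle_{\mathbb{S}^{d-1}}\rvert^2=\lvert\hat f_{0,0}\rvert^2+\sum_{n\geq 1}\Big(\sum_{j=1}^\infty\kappa^2\!\left(\tfrac{n}{2^{j-1}}\right)\Big)\sum_{k\in\mathcal{I}_n^d}\lvert\hat f_{n,k}\rvert^2.
\end{equation*}
By \eqref{eqkappa}, $\kappa^2(n/2^{j-1})=\phi^2(n/2^{j})-\phi^2(n/2^{j-1})$, so for fixed $n$ the inner sum is finite and telescopes to $\lim_{J\to\infty}\phi^2(n/2^J)-\phi^2(n)=1-\phi^2(n)$, which equals $1$ for every $n\geq 1$ since $\phi$ vanishes on $[1,\infty)$. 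Hence the right-hand side is $\sum_{n\geq 0}\sum_{k\in\mathcal{I}_n^d}\lvert\hat f_{n,k}\rvert^2=\lVert f\rVert_{L^2(\mathbb{S}^{d-1})}^2$ by Parseval's identity, which is the claim. The only step that needs genuine care is the appeal to \hyperref[lemma6]{Lemma~\ref*{lemma6}}: one must ensure $F_j\in\mathcal{M}_{2^j,K}^1(SO(d))$ so that the product $\lvert F_j\rvert^2$ is integrated exactly by the combined quadrature, and this is precisely where the structural constraints on the $\zeta_{K,k}^{d,n}$ (vanishing when $k_1>K$) and the band-limitation $\supp\kappa\subset[1/2,2]$ are used; everything else is routine bookkeeping with orthogonality and the telescoping partition of unity $\phi^2(n)+\sum_{j\geq 1}\kappa^2(n/2^{j-1})=1$.
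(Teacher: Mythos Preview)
Your proof is correct and follows essentially the same approach as the paper: handle $j=0$ directly, apply \hyperref[lemma6]{Lemma~\ref*{lemma6}} at each scale $j\geq 1$ to convert the discrete sum into $\int_{SO(d)}\lvert\langle f,T^d(g)\Psi_{\scriptscriptstyle K}^j\rangle\rvert^2\,\mathrm{d}\mu_d(g)$, evaluate this via the orthogonality relations \eqref{eq23} and the normalization \eqref{eq1}, and conclude by the telescoping identity \eqref{eqkappa} together with Parseval. Your write-up is in fact slightly more careful than the paper's in two places --- you explicitly justify that $\overline{F_j}\in\mathcal{M}_{2^j,K}^1(SO(d))$ before invoking \hyperref[lemma6]{Lemma~\ref*{lemma6}} on $\lvert F_j\rvert^2$, and you spell out the telescoping sum over $j$ --- but these are exactly the steps the paper leaves implicit.
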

\begin{proof}
For $j=0$ we have $\lvert \langle f,\Psi_{\scriptscriptstyle K}^{0, 1, 1}\rangle_{\mathbb{S}^{d-1}}\rvert^2 = \lvert \langle f, Y_0^{d,0}   \rangle_{\mathbb{S}^{d-1}}\rvert^2$. If $j\geq 1$, then
\begin{equation*}
\lvert \langle f,\Psi_{\scriptscriptstyle K}^{j, \ell, m}\rangle_{\mathbb{S}^{d-1}}\rvert^2 = \omega_{j, \ell}\, \mu_{j,m} \lvert \langle f,T^d(g_{\eta_{j, \ell}} h_{j,m})\Psi_{\scriptscriptstyle K}^{j}\rangle_{\mathbb{S}^{d-1}}\rvert^2.
\end{equation*}
Hence, \hyperref[lemma6]{Lemma~\ref*{lemma6}} yields
\begin{equation*}
\sum_{\ell=1}^{s_{j}} \sum_{m=1}^{r_j} \lvert \langle f,\Psi_{\scriptscriptstyle K}^{j, \ell, m}\rangle_{\mathbb{S}^{d-1}}\rvert^2 = \int_{SO(d)} \lvert\langle f, T^d(g)\Psi_{\scriptscriptstyle K}^{j} \rangle_{\mathbb{S}^{d-1}}\rvert^2 \, \mathrm{d}\mu_{d}(g).
\end{equation*}
Using the fact that
\begin{align*}
\langle T^d(g)\Psi_{\scriptscriptstyle K}^{j}, f \rangle_{\mathbb{S}^{d-1}}  =\sum_{n=0}^\infty \sum_{k, k'\in  \mathcal{I}_{n}^d} \sqrt{\dim \mathcal{H}_n^d} \, \kappa\!\left(\frac{n}{2^{j-1}} \right) \zeta_{ K, k'}^{d,n}\, \langle  Y_k^{d,n}, f \rangle_{\mathbb{S}^{d-1}} \, t_{k, k'}^{d, n}(g),
\end{align*}
we deduce from the orthogonality of the matrix coefficients $t_{k, k'}^{d, n}$, as well as from \eqref{eq23} and \eqref{eq1}, the equality
\begin{equation*}
\sum_{\ell=1}^{s_{j}} \sum_{m=1}^{r_j} \lvert \langle f,\Psi_{\scriptscriptstyle K}^{j, \ell, m}\rangle_{\mathbb{S}^{d-1}}\rvert^2 = \sum_{n=0}^\infty \kappa^2\!\left(\frac{n}{2^{j-1}} \right) \lvert \langle f, Y_k^{d, n} \rangle_{\mathbb{S}^{d-1}}\rvert^2.
\end{equation*}
The frame property now follows from the construction of $\kappa$ and Parseval's identity.
\end{proof}

\begin{corollary}
For $f \in L^2(\mathbb{S}^{d-1})$ we have
\begin{equation*}
f = \sum_{j=0}^\infty \sum_{\ell=1}^{s_{j}} \sum_{m=1}^{r_j} \langle f,\Psi_{\scriptscriptstyle K}^{j, \ell, m}\rangle_{\mathbb{S}^{d-1}}\Psi_{\scriptscriptstyle K}^{j, \ell, m},
\end{equation*}
where the series on the right hand sight converges unconditionally with respect to $\| \cdot \|_{L^2(\mathbb{S}^{d-1})}$.
\end{corollary}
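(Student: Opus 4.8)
The statement to prove is the Corollary: that every $f \in L^2(\mathbb{S}^{d-1})$ admits the unconditionally convergent expansion $f = \sum_{j, \ell, m} \langle f, \Psi_{\scriptscriptstyle K}^{j, \ell, m} \rangle_{\mathbb{S}^{d-1}} \Psi_{\scriptscriptstyle K}^{j, \ell, m}$. This is a soft consequence of the tight frame property established in the preceding Proposition, so the plan is essentially to invoke the abstract theory of tight frames in Hilbert spaces.

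The plan is as follows. First, recall that the preceding Proposition shows the system $\{\Psi_{\scriptscriptstyle K}^{j, \ell, m}\}$ is a Parseval (normalized tight) frame for $L^2(\mathbb{S}^{d-1})$: for all $f$, $\sum_{j, \ell, m} |\langle f, \Psi_{\scriptscriptstyle K}^{j, \ell, m} \rangle_{\mathbb{S}^{d-1}}|^2 = \|f\|_{L^2(\mathbb{S}^{d-1})}^2$. Second, invoke the standard fact that for a Parseval frame the analysis operator is an isometry, hence its adjoint, the synthesis operator $T(c) = \sum_{j,\ell,m} c_{j,\ell,m} \Psi_{\scriptscriptstyle K}^{j,\ell,m}$, composed with analysis gives the identity; equivalently, the frame operator $S f = \sum_{j,\ell,m} \langle f, \Psi_{\scriptscriptstyle K}^{j,\ell,m}\rangle_{\mathbb{S}^{d-1}} \Psi_{\scriptscriptstyle K}^{j,\ell,m}$ equals $\mathrm{Id}$, which is exactly the claimed identity. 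Third, for unconditional convergence, note that the coefficient sequence $(\langle f, \Psi_{\scriptscriptstyle K}^{j,\ell,m}\rangle_{\mathbb{S}^{d-1}})$ lies in $\ell^2$ by the Parseval identity, and for any $\ell^2$ sequence $c$ the series $\sum c_{j,\ell,m} \Psi_{\scriptscriptstyle K}^{j,\ell,m}$ converges unconditionally in norm — a direct consequence of the Bessel bound (upper frame bound $1$), since for any finite set $F$ of indices one has $\|\sum_{F} c_{j,\ell,m}\Psi_{\scriptscriptstyle K}^{j,\ell,m}\|_{L^2}^2 \le \sum_F |c_{j,\ell,m}|^2$ by duality with the Bessel inequality, so partial sums over finite sets form a Cauchy net.

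Concretely, I would write: by the Cauchy–Schwarz inequality in $\ell^2$ applied after the Bessel estimate, for any finite index set $F$,
\begin{equation*}
\Big\| \sum_{(j,\ell,m) \in F} \langle f,\Psi_{\scriptscriptstyle K}^{j, \ell, m}\rangle_{\mathbb{S}^{d-1}}\Psi_{\scriptscriptstyle K}^{j, \ell, m} \Big\|_{L^2(\mathbb{S}^{d-1})}^2 \le \sum_{(j,\ell,m)\in F} \lvert\langle f,\Psi_{\scriptscriptstyle K}^{j, \ell, m}\rangle_{\mathbb{S}^{d-1}}\rvert^2,
\end{equation*}
where the last bound uses that the system has upper frame bound $1$ (again from the Proposition). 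Since the full sum of squares converges, the tails vanish, so the net of finite partial sums is Cauchy and the series converges unconditionally to some $\tilde f \in L^2(\mathbb{S}^{d-1})$. Finally, testing against an arbitrary $g \in L^2(\mathbb{S}^{d-1})$ and using the polarization identity derived from the Parseval property — namely $\langle f, g\rangle_{\mathbb{S}^{d-1}} = \sum_{j,\ell,m} \langle f,\Psi_{\scriptscriptstyle K}^{j,\ell,m}\rangle_{\mathbb{S}^{d-1}} \overline{\langle g,\Psi_{\scriptscriptstyle K}^{j,\ell,m}\rangle_{\mathbb{S}^{d-1}}}$ — shows $\langle \tilde f, g\rangle_{\mathbb{S}^{d-1}} = \langle f, g\rangle_{\mathbb{S}^{d-1}}$ for all $g$, hence $\tilde f = f$.

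There is no real obstacle here: the content is entirely in the already-proved Parseval identity, and the rest is the textbook passage from "Parseval frame" to "reconstruction formula with unconditional convergence." The only point requiring a line of care is justifying the upper frame bound used in the Bessel estimate, but this is immediate since the Parseval equality in the Proposition gives upper (and lower) bound exactly $1$. I would likely just cite a standard reference on frame theory (e.g. Christensen's book) for the abstract statement and keep the written proof to the few lines above.
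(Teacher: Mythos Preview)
Your proposal is correct and matches the paper's approach: the paper states the Corollary without proof, treating it as an immediate consequence of the Parseval frame identity in the preceding Proposition via standard frame theory. Your write-up simply spells out that standard passage from tight frame to unconditionally convergent reconstruction, which is exactly what is implicitly invoked.
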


To end this section, we want to comment on the quadrature formulas used in our construction. Due to the limited directionality of the wavelets $\Psi_{\scriptscriptstyle K}^j$, the transformation
\begin{equation*}
g \mapsto \langle f, T^d(g)\Psi_{\scriptscriptstyle K}^j \rangle_{\mathbb{S}^{d-1}}
\end{equation*}
lies in $\mathcal{M}_{2^j, K}^1(SO(d))$, which allows us to use products of quadrature rules for $\Pi_{2^{j+1}}(\mathbb{S}^{d-1})$ and $\Pi_{2K}(SO(d-1))$ at each scale $j$ (see \hyperref[lemma6]{Lemma~\ref*{lemma6}}). Here, we can assume that the amount of samples $s_j$ is comparable to the dimension of the corresponding polynomial subspace on the sphere \citep{bib34}. I.e.,
\begin{equation}\label{eq100}
s_j \sim \dim \Pi_{2^{j+1}}(\mathbb{S}^{d-1}) \sim 2^{j(d-1)},
\end{equation}
where we use the notation $A \sim B$ to indicate that there exist constants $c_1, c_2>0$ such that $ c_1 B \leq A \leq c_2 B $. The constants of equivalence in \eqref{eq100} dependent only on the dimension $d$. In particular, the amount of samples needed is comparable to the isotropic case (see \autoref{subsec6} for more details).

\section{Localization and directional sensitivity}\label{sec4}
In this section, we will use the notation
\begin{equation}\label{eq11}
\Psi_{\scriptscriptstyle K}^{\scriptscriptstyle N} = \sum_{n=0}^{\infty}\sum_{k\in  \mathcal{I}_{n}^d} \sqrt{\dim \mathcal{H}_n^d} \, \kappa\!\left( \frac{n}{N} \right) \zeta_{K, k}^{d,n} \,  Y_k^{d,n}, \quad N \in \mathbb{N},
\end{equation}
with the same assumptions on $\kappa$ and $\zeta_{ K, k}^{d,n}$ as discussed earlier. In the isotropic case, where $K=0$, localization bounds for wavelets of the type $\Psi_{\scriptscriptstyle 0}^{\scriptscriptstyle N}$ are well known (see e.g.\ \citep{bib3, bib15, bib16}). However, when it comes to the more complex directional systems, only the two-dimensional setting has been investigated \citep{bib9, bib26, bib31}. The following theorem now extends these previous results by providing a unified localization bound for isotropic and non-isotropic polynomial wavelets on all spheres $\mathbb{S}^{d-1}$, $d\geq 3$. Here, and throughout the remainder of the article, the occurring constants $c, c_0, c_1, ...$ only depend on the dimension $d$ and the indicated parameters. Furthermore, their exact values might change with each appearance.

\begin{theorem}\label{theorem1}
Let $\Psi_{\scriptscriptstyle K}^{\scriptscriptstyle N}$ be as in \eqref{eq11} with $\kappa \in C^{3q-1}([0, \infty))$, $q \in \mathbb{N}$. Then it holds that
\begin{equation}\label{eq101}
\lvert \Psi_{\scriptscriptstyle K}^{\scriptscriptstyle N}(\eta)\rvert \leq \frac{c_{K, q} N^{d-1}}{(1+N \dist(\eta, e^d))^q} \sum_{m=0}^K [N \sin( \dist(\eta, e^d)) ]^{m}, \quad \eta \in \mathbb{S}^{d-1}.
\end{equation}
In particular,
\begin{equation}\label{eq12}
\lvert \Psi_{\scriptscriptstyle K}^{\scriptscriptstyle N}(\eta)\rvert \leq \frac{c_{K, q}N^{d-1}}{(1+N \dist(\eta, e^d))^{q-K}}, \quad \eta \in \mathbb{S}^{d-1}.
\end{equation}
\end{theorem}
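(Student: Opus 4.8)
The plan is to reduce the estimate to known one-dimensional localization bounds for Gegenbauer-type kernels by exploiting the coordinate structure of the spherical harmonics $Y_k^{d,n}$ in \eqref{y_n^k def}, which will expose the factor $\sin^{|k_1|}(\theta_{d-1})$ that is responsible for the sum $\sum_{m=0}^K [N\sin\dist(\eta,e^d)]^m$. Writing $\theta = \dist(\eta, e^d) = \theta_{d-1}$ and using the explicit form of $Y_k^{d,n}$, one sees that for each fixed $k$ with $k_1 \le K$, the contribution of the $k$-th term to $\Psi_K^N(\eta)$ is a product of $\sin^{|k_1|}\theta$ with a function of $\theta$ alone (coming from the Gegenbauer factor $C_{n-k_1}^{(d-2)/2 + k_1}(\cos\theta)$ summed against $\kappa(n/N)\sqrt{\dim\mathcal{H}_n^d}\,\zeta_{K,k}^{d,n}$) times a bounded factor in the remaining angles. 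Since $\zeta_{K,k}^{d,n}$ is independent of $n$ for $n\ge K$ and $|\zeta_{K,k}^{d,n}| \le 1$ by \eqref{eq1}, the problematic dependence on $n$ sits entirely in $\kappa(n/N)\sqrt{\dim\mathcal{H}_n^d}$ multiplying a Gegenbauer polynomial, which is exactly the isotropic situation but with shifted Gegenbauer index $(d-2)/2 + k_1$ and shifted degree $n - k_1$.

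Concretely, I would first fix $k = (k_1,\dots,k_{d-2}) \in \mathcal{I}_n^d$ with $k_1 \le K$ and isolate the partial sum
\begin{equation*}
S_{k_1}^N(\theta) = \sum_{n} \sqrt{\dim\mathcal{H}_n^d}\, \kappa\!\left(\tfrac{n}{N}\right) \zeta_{K,k}^d\, C_{n-k_1}^{\frac{d-2}{2}+k_1}(\cos\theta),
\end{equation*}
and show a bound of the shape $|S_{k_1}^N(\theta)| \le c_{K,q} N^{d-1+k_1} (1 + N\theta)^{-q}$ uniformly, by treating $n \mapsto \sqrt{\dim\mathcal{H}_n^d}\,\kappa(n/N)\,\zeta_{K,k}^d$ as a $C^{3q-1}$ symbol in $n/N$ of controlled size (the $\sqrt{\dim\mathcal{H}_n^d}$ factor contributes a polynomial-in-$n$ weight, effectively raising the power of $N$). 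This is the standard summation-by-parts / discrete stationary phase argument used for isotropic polynomial kernels, as in \citep{bib3, bib15, bib16}; the shift of the Gegenbauer parameter by $k_1 \le K$ only affects constants, which is why the final constant depends on $K$. The extra $\sin^{|k_1|}\theta$ factor from \eqref{y_n^k def} then produces precisely a term $[\,\sin\theta\,]^{k_1} N^{d-1+k_1} (1+N\theta)^{-q} = N^{d-1}(1+N\theta)^{-q}[N\sin\theta]^{k_1}$; summing over the finitely many admissible $k$ with $k_1 = m$, $m = 0,\dots,K$, and bounding the remaining-angle factors by constants gives \eqref{eq101}.

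For \eqref{eq12}, I would simply note $N\sin\theta \le N\theta$ and $N\sin\theta \le c\,N\theta/(1+N\theta)^{?}$ — more carefully, $[N\sin\theta]^m \le (N\theta)^m \le (1+N\theta)^m$, so each term in \eqref{eq101} is bounded by $c_{K,q} N^{d-1}(1+N\theta)^{m-q} \le c_{K,q} N^{d-1}(1+N\theta)^{K-q}$, and summing the $K+1$ terms absorbs into the constant.

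The main obstacle I anticipate is making the one-dimensional bound for $S_{k_1}^N$ fully rigorous with the correct power of $N$: one must carefully track how the factor $\sqrt{\dim\mathcal{H}_n^d} \sim n^{(d-2)/2}$ interacts with the Gegenbauer localization estimate, since $C_{m}^{\lambda}(\cos\theta)$ itself is of size $\sim m^{2\lambda-1}$ near $\theta=0$, and one needs the smoothness hypothesis $\kappa \in C^{3q-1}$ (rather than just $C^q$) to gain enough decay — presumably because differentiating the Gegenbauer-to-Gegenbauer conversion and the weight $\sqrt{\dim\mathcal{H}_n^d}$ costs derivatives, roughly a factor of $3$ per unit of decay. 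Pinning down that bookkeeping, and confirming that the constant's dependence on $K$ is the only new feature compared with the isotropic case, is where the real work lies; the rest is assembling finitely many such estimates.
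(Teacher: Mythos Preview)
Your outline matches the paper's strategy: decompose $Y_k^{d,n}$ via \eqref{y_n^k def}, pull out $\sin^{|k_1|}\theta$, bound the remaining one-variable Gegenbauer sum by $c N^{d-1+|k_1|}(1+N\theta)^{-q}$, and assemble. Your derivation of \eqref{eq12} from \eqref{eq101} is exactly the paper's.

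The gap you yourself flag---making the bound on $S_{k_1}^N$ rigorous---is the only real content, and two concrete ingredients resolve it. First, your formula for $S_{k_1}^N$ omits the $n$-dependent part of the normalization $A_k^n$; once restored, the full weight is $\sqrt{\dim\mathcal{H}_n^d}\cdot\sqrt{(n-k_1)!(2n+d-2)/\Gamma(n+k_1+d-2)}$, which is \emph{not} a smooth symbol in $n/N$ but admits an asymptotic expansion $(2n+d-2)\,n^{-k_1}\bigl(\sum_{j=0}^{p-1}c_j n^{-j}+O(n^{-p})\bigr)$. The paper treats each term $c_j$ separately and bounds the $O(n^{-p})$ remainder crudely via Markov's inequality for $\tfrac{d^{k_1}}{dt^{k_1}}C_n^{(d-2)/2}$, choosing $p$ large enough that the remainder is $O(N^{-q})$. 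Second, rather than working with the shifted index $C_{n-k_1}^{(d-2)/2+k_1}$ directly (where ``only constants change'' is not quite right---the power of $N$ changes too), the paper uses the identity $C_{n-k_1}^{(d-2)/2+k_1}(t)=\text{const}\cdot\tfrac{d^{k_1}}{dt^{k_1}}C_n^{(d-2)/2}(t)$ to reduce everything to the base index $(d-2)/2$. Then each surviving term is $N^{-k_1-j}\sum_n(2n+d-2)\,\kappa_j(n/N)\,\tfrac{d^{k_1}}{dt^{k_1}}C_n^{(d-2)/2}(\cos\theta)$ with $\kappa_j(t)=\kappa(t)t^{-k_1-j}$, and \cite[Theorem~2.6.7]{bib3} gives exactly the bound $c_q N^{d-1+2k_1}(1+N\theta)^{-q}$ for such derivative kernels. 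The smoothness budget $C^{3q-1}$ is what survives after forming $\kappa_j$ and feeding it to that theorem; it is not a ``factor of 3 per unit of decay'' heuristic.
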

\begin{proof}
The following proof uses arguments similar to those in the two-dimensional setting \citep[Proposition~3.1]{bib31}. First, we note that the normalization factor defined in \eqref{eq10} can be written as
\begin{align*}
A_k^n = \sqrt{\frac{(n-\lvert k_1 \rvert)! (2n+d-2)}{\Gamma(n+\lvert k_1\rvert +d-2)}} \tilde{A}_k,
\end{align*}
with $\tilde{A}_k$ being independent of $n$. Thus, by using the well known property
\begin{equation*}
C_{n-\lvert k_1\rvert }^{\frac{d-2}{2}+\lvert k_1 \rvert }(\cos \theta_{d-1}) = \frac{\Gamma ( \frac{d-2}{2} )}{2^{\lvert k_1 \rvert} \Gamma ( \frac{d-2}{2} + \lvert k_1 \rvert )} \frac{\mathrm{d}^{\lvert k_1\rvert}}{\mathrm{d}t^{\lvert k_1\rvert }} C_n^{\frac{d-2}{2}}(\cos \theta_{d-1})
\end{equation*}
of the Gegenbauer polynomials (see e.g.\ \citep[p.~460]{bib32}), we obtain
\begin{align*}
&Y_k^n(\theta_1, ..., \theta_{d-1}) = \sqrt{\frac{(n-\lvert k_1 \rvert)! (2n+d-2)}{\Gamma(n+\lvert k_1\rvert +d-2)}}  \frac{\mathrm{d}^{\lvert k_1\rvert}}{\mathrm{d}t^{\lvert k_1\rvert }} C_n^{\frac{d-2}{2}}(\cos \theta_{d-1})\\
& \qquad \qquad \qquad \qquad \times \sin^{\lvert k_1 \rvert }(\theta_{d-1})\,  h_k(\theta_1, ..., \theta_{d-2}),
\end{align*}
where
\begin{align*}
&h_k(\theta_1, ..., \theta_{d-2})=  \frac{\tilde{A}_k  \Gamma(\frac{d-2}{2} )}{2^{\lvert k_1\rvert } \Gamma ( \frac{d-2}{2} +\lvert k_1\rvert )} \\
& \quad \qquad \qquad  \times \prod_{j=1}^{d-3} C_{ k_j  - \lvert k_{j+1}\rvert}^{\frac{d-j-2}{2}+\lvert k_{j+1}\rvert}(\cos \theta_{d-j-1}) \, \sin^{\lvert k_{j+1}\rvert}(\theta_{d-j-1}) \, \mathrm{e}^{\mathrm{i}k_{d-2}\theta_1}.
\end{align*}
We will assume that $N \geq 2 K$. Then, all non-vanishing summands in \eqref{eq11} satisfy $n \geq K$. This means that all occurring directionality components are independent of $n$ and we can switch the order of summation. Namely,
\begin{align*}
& \Psi_{\scriptscriptstyle K}^{\scriptscriptstyle N}(\theta_1, ..., \theta_{d-1}) = \sum_{k \in \mathcal{I}_K^d} h_{k}(\theta_1, ..., \theta_{d-2}) \,  \zeta_{ K, k}^{d} \, \sin^{\lvert k_1\rvert }(\theta_{d-1})\\
& \qquad  \times \sum_{n=0}^\infty \sqrt{\frac{ \dim \mathcal{H}_n^d \, (n-\lvert k_1\rvert )! (2n+d-2)}{\Gamma(n+\lvert k_1\rvert +d-2)}} \,  \kappa\! \left( \frac{n}{N} \right) \frac{\mathrm{d}^{\lvert k_1\rvert }}{\mathrm{d}t^{\lvert k_1\rvert }}C_n^{\frac{d-2}{2}}(\cos \theta_{d-1}).
\end{align*}
It is not hard to verify that for any fixed $\lvert k_1 \rvert \leq K$ and $p \in \mathbb{N}_0$, there exist constants $c_0,c_1, ..., c_{p-1}$ such that
\begin{align*}
\sqrt{\frac{ \dim \mathcal{H}_n^d \, (n-\lvert k_1\rvert )! (2n+d-2)}{\Gamma(n+\lvert k_1\rvert +d-2)}}  = (2n+d-2)n^{-\lvert k_1 \rvert }\left(\sum_{j=0}^{p-1} c_j n^{-j} +\mathcal{O}(n^{-p}) \right).
\end{align*}
Consequently, we obtain 
\begin{align*}
& \sum_{n=0}^\infty \sqrt{\frac{ \dim \mathcal{H}_n^d \, (n-\lvert k_1\rvert )! (2n+d-2)}{\Gamma(n+\lvert k_1\rvert +d-2)}} \,  \kappa\! \left( \frac{n}{N} \right) \frac{\mathrm{d}^{\lvert k_1\rvert }}{\mathrm{d}t^{\lvert k_1\rvert }}C_n^{\frac{d-2}{2}}(\cos \theta_{d-1})\\
&\quad \qquad \qquad = \sum_{j=0}^{p-1}c_j \sum_{n=0}^\infty(2n+d-2)\,\kappa\! \left( \frac{n}{N} \right)n^{-\lvert k_1\rvert-j}\frac{\mathrm{d}^{\lvert k_1\rvert }}{\mathrm{d}t^{\lvert k_1\rvert }}C_n^{\frac{d-2}{2}}(\cos \theta_{d-1})\\
& \quad \qquad \quad \qquad   + R(N, \lvert k_1\rvert , \theta_{d-1})
\end{align*}
with
\begin{equation*}
\lvert R(N, \lvert k_1\rvert , \theta_{d-1}) \rvert \leq c \sum_{n=0}^\infty\kappa\! \left( \frac{n}{N} \right)n^{-\lvert k_1\rvert-p+1} \sup_{s\in [-1,1]}\left\lvert \frac{\mathrm{d}^{\lvert k_1\rvert }}{\mathrm{d}t^{\lvert k_1\rvert}}C_n^{\frac{d-2}{2}}(s) \right\rvert.
\end{equation*}
The classical Markov inequality for algebraic polynomials yields
\begin{equation*}
\left\lvert \frac{\mathrm{d}^{\lvert k_1\rvert }}{\mathrm{d}t^{\lvert k_1\rvert }}C_n^{\frac{d-2}{2}}(s) \right\rvert \leq n^{2\lvert k_1\rvert} \sup_{s\in [-1, 1] } \left \lvert C_n^{\frac{d-2}{2}}(s) \right\rvert \leq c\, n^{2\lvert k_1 \rvert + d-3}
\end{equation*}
and therefore $\lvert R(N, \lvert k_1\rvert , \theta_{d-1}) \rvert \leq c N^{\lvert k_1\rvert +d-p-1}$. For our purposes it suffices to choose $p=\lvert k_1\rvert+d+q-1$, as this yields $\lvert R(N, \lvert k_1\rvert , \theta_{d-1}) \rvert \leq c N^{-q}$. Furthermore,
\begin{align*}
&\sum_{n=0}^\infty(2n+d-2)\, \kappa\! \left( \frac{n}{N} \right)n^{-\lvert k_1\rvert -j}\frac{\mathrm{d}^{\lvert k_1\rvert }}{\mathrm{d}t^{\lvert k_1\rvert }}C_n^{\frac{d-2}{2}}(\cos \theta_{d-1})\\
&\qquad \qquad = N^{-\lvert k_1\rvert -j}\sum_{n=0}^\infty (2n+d-2)\, \kappa_j\! \left( \frac{n}{N} \right)\frac{\mathrm{d}^{\lvert k_1\rvert }}{\mathrm{d}t^{\lvert k_1\rvert }}C_n^{\frac{d-2}{2}}(\cos \theta_{d-1}),
\end{align*}
where $\kappa_j(t) = \kappa(t)t^{-\lvert k_1\rvert -j}$. Finally, by \citep[Theorem~2.6.7.]{bib3}, it holds that
\begin{align*}
\left\lvert\sum_{n=0}^\infty (2n+d-2)\, \kappa_j\!\left( \frac{n}{N} \right)\frac{\mathrm{d}^{\lvert k_1\rvert }}{\mathrm{d}t^{\lvert k_1\rvert }}C_n^{\frac{d-2}{2}}(\cos \theta_{d-1})\right\rvert \leq \frac{c_{ q}N^{d-1 + 2\lvert k_1\rvert } }{(1+N \theta_{d-1})^q}.
\end{align*}
This completes the proof of \eqref{eq101}. Now, in order to verify \eqref{eq12} it suffices to show that
\begin{equation*}
[N \sin( \dist(\eta, e^d)) ]^{m} \leq c (1+N \dist(\eta, e^d))^{K}, \quad m=0, 1, ..., K.
\end{equation*}
This, however, is obvious.
\end{proof}
The localization bound in \autoref{theorem1} can be utilized to estimate the $L^p$-norms of $\Psi_{\scriptscriptstyle K}^{\scriptscriptstyle N}$.

\begin{corollary}\label{corollary1}
Let $0<p<\infty$ and let $\Psi_{\scriptscriptstyle K}^{\scriptscriptstyle N}$ be the wavelet defined in \eqref{eq11} with $\kappa \in C^{3q-1}([0, \infty))$, $q\geq K+d/p$. Then,
\begin{equation*}
\|\Psi_{\scriptscriptstyle K}^{\scriptscriptstyle N} \|_{L^p(\mathbb{S}^{d-1})} \sim N^{(d-1)(1-1/p)}.
\end{equation*}
Additionally, for $q\geq K + d/2$, there exists some $c>0$ such that
\begin{equation*}
\|\Psi_{\scriptscriptstyle K}^{\scriptscriptstyle N} \|_{L^\infty(\mathbb{S}^{d-1})} \sim \sup_{\eta \in C(e^d, c/N)} \lvert \Psi_{\scriptscriptstyle K}^{\scriptscriptstyle N}(\eta) \rvert \sim N^{d-1}.
\end{equation*}
Here, the constants of equivalence only depend on $\kappa, K, q, p$ and $d$.
\end{corollary}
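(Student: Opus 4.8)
The plan is to read the two-sided estimate off \autoref{theorem1} together with Parseval's identity and the Nikolskii (reverse H\"older) inequality for spherical polynomials, using throughout that $\Psi_{\scriptscriptstyle K}^{\scriptscriptstyle N}\in\Pi_{2N}(\mathbb{S}^{d-1})$ because $\supp\kappa\subset[1/2,2]$. For the upper bounds, note that $q\geq K+d/p$ forces $p(q-K)\geq d>d-1$, so I would start from \eqref{eq12}, i.e. $|\Psi_{\scriptscriptstyle K}^{\scriptscriptstyle N}(\eta)|\leq c N^{d-1}(1+N\dist(\eta,e^d))^{-(q-K)}$. Writing the integral in spherical coordinates so that only $\theta_{d-1}=\dist(\eta,e^d)$ enters, bounding $\sin\theta_{d-1}\leq\theta_{d-1}$, and substituting $u=N\theta_{d-1}$ gives
\[
\|\Psi_{\scriptscriptstyle K}^{\scriptscriptstyle N}\|_{L^p(\mathbb{S}^{d-1})}^p\leq c\,N^{(p-1)(d-1)}\int_0^{N\pi}\frac{u^{d-2}}{(1+u)^{p(q-K)}}\,\mathrm{d}u\leq c\,N^{(p-1)(d-1)},
\]
the last integral being bounded uniformly in $N$ precisely because $p(q-K)>d-1$; this is the bound $\|\Psi_{\scriptscriptstyle K}^{\scriptscriptstyle N}\|_{L^p}\leq cN^{(d-1)(1-1/p)}$. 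For the $L^\infty$ upper bound I would instead use \eqref{eq101} directly: estimating $\sum_{m=0}^K[N\sin\theta_{d-1}]^m\leq c(1+N\theta_{d-1})^K$ and using $q\geq K$ gives $\|\Psi_{\scriptscriptstyle K}^{\scriptscriptstyle N}\|_{L^\infty}\leq cN^{d-1}$.

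For the $L^2$ norm and the lower bound when $0<p\leq2$: Parseval's identity and the normalization \eqref{eq1} give $\|\Psi_{\scriptscriptstyle K}^{\scriptscriptstyle N}\|_{L^2(\mathbb{S}^{d-1})}^2=\sum_{n}\dim\mathcal{H}_n^d\,\kappa^2(n/N)$, and since $\dim\mathcal{H}_n^d\sim n^{d-2}$ while $\kappa$ is a fixed, not identically zero, continuous bump on $[1/2,2]$, the sum runs over $n\in[N/2,2N]$ and is $\sim N^{d-2}\cdot N=N^{d-1}$; thus $\|\Psi_{\scriptscriptstyle K}^{\scriptscriptstyle N}\|_{L^2}\sim N^{(d-1)/2}$, which settles $p=2$. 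For $0<p<2$, the Nikolskii inequality $\|P\|_{L^2}\leq cN^{(d-1)(1/p-1/2)}\|P\|_{L^p}$ for $P\in\Pi_{2N}(\mathbb{S}^{d-1})$, applied to $\Psi_{\scriptscriptstyle K}^{\scriptscriptstyle N}$, yields the matching lower bound $\|\Psi_{\scriptscriptstyle K}^{\scriptscriptstyle N}\|_{L^p}\geq cN^{(d-1)(1-1/p)}$.

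The remaining range $2<p\leq\infty$ is the crux: here the localization must be promoted to a statement about concentration of $L^2$-mass near $e^d$. Using \eqref{eq12} as above, the part of $\|\Psi_{\scriptscriptstyle K}^{\scriptscriptstyle N}\|_{L^2}^2$ coming from $\dist(\eta,e^d)>c_0/N$ is at most $cN^{d-1}\int_{c_0}^\infty u^{d-2}(1+u)^{-2(q-K)}\,\mathrm{d}u$, which tends to $0$ as $c_0\to\infty$ provided $2(q-K)>d-1$ --- this is exactly why the $L^\infty$ statement carries the hypothesis $q\geq K+d/2$. Hence, for $c_0$ large enough, $\int_{C(e^d,c_0/N)}|\Psi_{\scriptscriptstyle K}^{\scriptscriptstyle N}|^2\,\mathrm{d}\omega_{d-1}\geq\tfrac12\|\Psi_{\scriptscriptstyle K}^{\scriptscriptstyle N}\|_{L^2}^2\geq cN^{d-1}$, and since $\omega_{d-1}(C(e^d,c_0/N))\leq cN^{-(d-1)}$, H\"older's inequality on $C(e^d,c_0/N)$ gives $\int_{C(e^d,c_0/N)}|\Psi_{\scriptscriptstyle K}^{\scriptscriptstyle N}|^p\geq cN^{(d-1)(p-1)}$ for $2<p<\infty$, hence $\|\Psi_{\scriptscriptstyle K}^{\scriptscriptstyle N}\|_{L^p}\geq cN^{(d-1)(1-1/p)}$, and likewise $\sup_{C(e^d,c_0/N)}|\Psi_{\scriptscriptstyle K}^{\scriptscriptstyle N}|\geq cN^{d-1}$; combined with the upper bounds these give the two asserted equivalences.

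The main obstacle is precisely this last lower bound. The $L^2$-concentration step needs the tail integral above to be summable, so for $q$ small relative to $p$ one cannot argue this way and has to analyse $\Psi_{\scriptscriptstyle K}^{\scriptscriptstyle N}$ pointwise near $e^d$. Using the expansion of $\Psi_{\scriptscriptstyle K}^{\scriptscriptstyle N}$ in the functions $h_k$ and the Gegenbauer-derivative kernels from the proof of \autoref{theorem1}, one picks the smallest $k_1^{\ast}\in\{0,\dots,K\}$ for which some directionality component $\zeta_{K,(k_1^{\ast},\dots)}^d$ is nonzero; along the sphere at geodesic distance $a/N$ from $e^d$ the dominant contribution then equals $c_0\,a^{k_1^{\ast}}N^{d-1}$ times a fixed nonzero spherical harmonic of degree $k_1^{\ast}$ on $\mathbb{S}^{d-2}$, up to an $\mathcal{O}(a^{k_1^{\ast}+1}N^{d-1})$ error controlled by a Bernstein estimate, so for a small fixed $a$ one obtains $|\Psi_{\scriptscriptstyle K}^{\scriptscriptstyle N}|\gtrsim N^{d-1}$ on a ``tube'' near $e^d$ of $\omega_{d-1}$-measure $\sim N^{-(d-1)}$ contained in $C(e^d,a/N)$; integrating over this tube proves $\|\Psi_{\scriptscriptstyle K}^{\scriptscriptstyle N}\|_{L^p}\geq cN^{(d-1)(1-1/p)}$ for every $0<p\leq\infty$ simultaneously. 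The point to be careful about is that $\Psi_{\scriptscriptstyle K}^{\scriptscriptstyle N}$ need not be maximal at $e^d$ --- indeed $\Psi_{\scriptscriptstyle K}^{\scriptscriptstyle N}(e^d)=0$ whenever the zonal component $\zeta_{K,(0,\dots,0)}^d$ vanishes --- which is why the argument has to be carried out on a slightly off-centre tube rather than at the point $e^d$ itself.
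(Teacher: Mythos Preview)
Your argument is correct, and the upper bounds, the $L^2$ computation via Parseval and \eqref{eq1}, and the cap-concentration argument for $\sup_{C(e^d,c/N)}|\Psi_{\scriptscriptstyle K}^{\scriptscriptstyle N}|$ all match the paper essentially verbatim.

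Where you diverge is in the lower bounds for $\|\Psi_{\scriptscriptstyle K}^{\scriptscriptstyle N}\|_{L^p}$, and here the paper's route is considerably shorter. For $0<p<2$ you invoke the Nikolskii inequality for spherical polynomials; the paper instead uses the elementary interpolation $\|\Psi_{\scriptscriptstyle K}^{\scriptscriptstyle N}\|_{L^2}^2\le\|\Psi_{\scriptscriptstyle K}^{\scriptscriptstyle N}\|_{L^\infty}^{2-p}\|\Psi_{\scriptscriptstyle K}^{\scriptscriptstyle N}\|_{L^p}^p$, combining the $L^2$ lower bound with the $L^\infty$ upper bound already in hand. For $2<p<\infty$ the paper simply applies global H\"older, $\|\Psi_{\scriptscriptstyle K}^{\scriptscriptstyle N}\|_{L^2}^2\le\|\Psi_{\scriptscriptstyle K}^{\scriptscriptstyle N}\|_{L^p}\|\Psi_{\scriptscriptstyle K}^{\scriptscriptstyle N}\|_{L^{p'}}$ with $p'=p/(p-1)<2$, so that the $L^{p'}$ upper bound bootstraps into the $L^p$ lower bound; likewise $\|\Psi_{\scriptscriptstyle K}^{\scriptscriptstyle N}\|_{L^2}^2\le\|\Psi_{\scriptscriptstyle K}^{\scriptscriptstyle N}\|_{L^\infty}\|\Psi_{\scriptscriptstyle K}^{\scriptscriptstyle N}\|_{L^1}$ handles $p=\infty$. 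Thus in the paper the cap-concentration step is used \emph{only} for the localized supremum statement (where the hypothesis $q\ge K+d/2$ is explicitly assumed), never for the $L^p$ lower bounds. The ``main obstacle'' you describe is therefore self-created: with the global H\"older trick your last two paragraphs --- the pointwise tube analysis near $e^d$ picking out the minimal $k_1^\ast$, the Bernstein-type error control, and the worry about $\Psi_{\scriptscriptstyle K}^{\scriptscriptstyle N}(e^d)$ possibly vanishing --- become unnecessary. Your argument has the merit of giving, en route, slightly more refined pointwise information about where $|\Psi_{\scriptscriptstyle K}^{\scriptscriptstyle N}|$ is large, but for the corollary as stated the paper's duality argument is both simpler and avoids any appeal to Nikolskii-type inequalities beyond H\"older.
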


\begin{proof}
We follow closely the proof for the isotropic case in \citep{bib30}. For $p=2$, the statement follows easily from \eqref{eq11} and Parseval's identity. In the general case $p\in (0, \infty)$,  we can use the localization bound \eqref{eq12} to obtain
\begin{equation*}
\|\Psi_{\scriptscriptstyle K}^{\scriptscriptstyle N} \|_{L^p(\mathbb{S}^{d-1})}^p \leq c N^{(d-1)p} \int_0^\pi \frac{\theta^{d-2}}{(1+N  \theta)^{(q-K)p}} \mathrm{d}\theta.
\end{equation*}
Now, repeated integration by parts yields 
\begin{equation*}
\|\Psi_{\scriptscriptstyle K}^{\scriptscriptstyle N} \|_{L^p(\mathbb{S}^{d-1})} \leq cN^{(d-1)(1-1/p)}.
\end{equation*}
Also, the estimate $\|\Psi_{\scriptscriptstyle K}^{\scriptscriptstyle N} \|_{L^{\infty}(\mathbb{S}^{d-1})} \leq cN^{(d-1)}$ is a direct consequence of \eqref{eq12} as well. The latter can be used to obtain the corresponding lower bound in the case $0<p<2$, since
\begin{equation*}
\|\Psi_{\scriptscriptstyle K}^{\scriptscriptstyle N} \|_{L^2(\mathbb{S}^{d-1})}^2 \leq \|\Psi_{\scriptscriptstyle K}^{\scriptscriptstyle N} \|_{L^{\infty}(\mathbb{S}^{d-1})}^{2-p} \|\Psi_{\scriptscriptstyle K}^{\scriptscriptstyle N} \|_{L^p(\mathbb{S}^{d-1})}^p.
\end{equation*}
For $2<p\leq \infty$, the lower estimate follows from the above and from Hölder's inequality. Precisely, we have
\begin{equation*}
\|\Psi_{\scriptscriptstyle K}^{\scriptscriptstyle N} \|_{L^2(\mathbb{S}^{d-1})}^2 \leq \|\Psi_{\scriptscriptstyle K}^{\scriptscriptstyle N} \|_{L^p(\mathbb{S}^{d-1})} \|\Psi_{\scriptscriptstyle K}^{\scriptscriptstyle N} \|_{L^q(\mathbb{S}^{d-1})}, \quad q = \frac{p}{p-1},
\end{equation*}
for $2<p<\infty$, as well as
\begin{equation*}
\|\Psi_{\scriptscriptstyle K}^{\scriptscriptstyle N} \|_{L^2(\mathbb{S}^{d-1})}^2 \leq \|\Psi_{\scriptscriptstyle K}^{\scriptscriptstyle N} \|_{L^\infty(\mathbb{S}^{d-1})}  \|\Psi_{\scriptscriptstyle K}^{\scriptscriptstyle N} \|_{L^1(\mathbb{S}^{d-1})}.
\end{equation*}
It remains to confirm the existence of constants $c, c_1>0$ such that
\begin{equation*}
\sup_{\eta \in C(e^d, c/N)} \lvert \Psi_{\scriptscriptstyle K}^{\scriptscriptstyle N}(\eta) \rvert \geq c_1 N^{d-1}.
\end{equation*}
Using the localization bound \eqref{eq12}, we first obtain
\begin{align*}
  c_1 N^{d-1}  \leq \|\Psi_{\scriptscriptstyle K}^{\scriptscriptstyle N} \|_{L^2(\mathbb{S}^{d-1})}^2 \leq \sup_{\eta \in C(e^d, r)} \lvert \Psi_{\scriptscriptstyle K}^{\scriptscriptstyle N}(\eta) \rvert^2 \, \omega_{d-1}( C(e^d, r)) + \int_{r}^\pi \frac{c_{K, q}^2N^{2(d-1)} \theta^{d-2}}{(1+N \theta)^{2(q-K)}} \, \mathrm{d}\theta,
\end{align*}
for each $r \in (0, \pi)$. Repeated integration by parts shows that
\begin{equation*}
\int_{r}^\pi \frac{c_{K, q}^2N^{2(d-1)} \theta^{d-2}}{(1+N \theta)^{2(q-K)}} \, \mathrm{d}\theta \leq \frac{c_2 N^{d-1}}{(1+N r)^{2(q-K)-(d-1)}}.
\end{equation*}
Also, $ \omega_{d-1}( C(e^d, r))  \sim r^{d-1}$. Hence, for $r=cN^{-1}$ we have
\begin{equation*}
N^{2(d-1)} \left(c_1 - \frac{c_2}{(1+c)^{2(q-K) - (d-1)}} \right)\leq c \cdot c_3 \sup_{\eta \in C(e^d, r)} \lvert \Psi_{\scriptscriptstyle K}^{\scriptscriptstyle N}(\eta) \rvert^2.
\end{equation*}
Choosing $c$ large enough completes the proof.
\end{proof}

By \autoref{theorem1}, it is clear that $T^d(g)\Psi_{\scriptscriptstyle K}^{\scriptscriptstyle N}$ is localized at the north pole $e^d$ if and only if $g \in SO(d-1)$. It is therefore natural to use the auto-correlation  function
\begin{equation*}
SO(d-1)\rightarrow \mathbb{C}, \qquad h \mapsto \langle T^d(h)\Psi_{\scriptscriptstyle K}^{\scriptscriptstyle N} , \Psi_{\scriptscriptstyle K}^{\scriptscriptstyle N} \rangle_{\mathbb{S}^{d-1}},
\end{equation*}
as a means to measure and judge the directional sensitivity of $\Psi_{\scriptscriptstyle K}^{\scriptscriptstyle N}$. We note, that this method was also previously used in the two-dimensional setting \citep{bib9, bib21}. \hyperref[lemma2]{Lemma~\ref*{lemma2}} yields the following representation of the auto-correlation function.

\begin{proposition}\label{prop3.2}
Let $d\geq 4$ and $h \in SO(d-1)$. Then it holds that
\begin{align*}
& \langle T^d(h)\Psi_{\scriptscriptstyle K}^{\scriptscriptstyle N} , \Psi_{\scriptscriptstyle K}^{\scriptscriptstyle N}  \rangle_{\mathbb{S}^{d-1}} = \sum_{n=0}^\infty \dim \mathcal{H}_n^d \, \kappa^2\!\left(\frac{n}{N}\right) \\
& \qquad \qquad \qquad \qquad \times \sum_{m=0}^{\min(K, n)}\sum_{k, k' \in \mathcal{I}_m^{d-1}} \zeta_{K, (m, k)}^{d, n} \, \overline{\zeta_{K, (m, k')}^{d, n}} \, t_{k', k}^{d-1, m}(h).
\end{align*}
For $d=3$ we have
\begin{equation*}
\langle T^3(h(\gamma))\Psi_{\scriptscriptstyle K}^{\scriptscriptstyle N} , \Psi_{\scriptscriptstyle K}^{\scriptscriptstyle N}  \rangle_{\mathbb{S}^{2}} = \sum_{n=0}^\infty \sum_{k=-\min(K, n)}^{\min(K, n)}  \dim \mathcal{H}_n^d \, \kappa^2\!\left(\frac{n}{N}\right) \lvert \zeta_{K,k }^{3, n} \rvert^2\, \mathrm{e}^{ \mathrm{i}k\gamma},
\end{equation*}
where $h(\gamma)$ is defined as in \hyperref[lemma2]{Lemma~\ref*{lemma2}}.
\end{proposition}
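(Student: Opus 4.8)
The plan is a direct computation, entirely parallel to the one carried out for the tight frame property in \autoref{sec3}. First I would insert the definition \eqref{eq11} and use that each harmonic subspace $\mathcal{H}_n^d$ is invariant under $T^d$ with orthonormal basis $\{Y_k^{d,n}\}_{k\in\mathcal{I}_n^d}$, so that $T^d(h)Y_\ell^{d,n}=\sum_{k\in\mathcal{I}_n^d}t_{k,\ell}^{d,n}(h)\,Y_k^{d,n}$ straight from the definition of the matrix coefficients. Since $\supp\kappa\subset[1/2,2]$, only the degrees $N/2\le n\le2N$ contribute, so every series occurring is a finite sum and all interchanges of summation are harmless. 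Abbreviating $a_{n,k}=\sqrt{\dim\mathcal{H}_n^d}\,\kappa(n/N)\,\zeta_{K,k}^{d,n}$, this gives $T^d(h)\Psi_{\scriptscriptstyle K}^{\scriptscriptstyle N}=\sum_n\sum_{k,k'\in\mathcal{I}_n^d}a_{n,k}\,t_{k',k}^{d,n}(h)\,Y_{k'}^{d,n}$.

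Next I would pair this against $\Psi_{\scriptscriptstyle K}^{\scriptscriptstyle N}=\sum_m\sum_{\ell\in\mathcal{I}_m^d}a_{m,\ell}\,Y_\ell^{d,m}$ and apply orthonormality, $\langle Y_{k'}^{d,n},Y_\ell^{d,m}\rangle_{\mathbb{S}^{d-1}}=\delta_{n,m}\,\delta_{k',\ell}$, which collapses the double sum over degrees and produces
\begin{equation*}
\langle T^d(h)\Psi_{\scriptscriptstyle K}^{\scriptscriptstyle N},\Psi_{\scriptscriptstyle K}^{\scriptscriptstyle N}\rangle_{\mathbb{S}^{d-1}}=\sum_{n=0}^\infty\dim\mathcal{H}_n^d\;\kappa^2\!\left(\frac{n}{N}\right)\sum_{k,k'\in\mathcal{I}_n^d}\zeta_{K,k}^{d,n}\,\overline{\zeta_{K,k'}^{d,n}}\;t_{k',k}^{d,n}(h).
\end{equation*}
It then remains to simplify the inner sum via \hyperref[lemma2]{Lemma~\ref*{lemma2}}. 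For $d\ge4$ it rewrites $t_{k',k}^{d,n}(h)$, $h\in SO(d-1)$, as $\delta_{(k')_1,k_1}\,t_{((k')_2,\dots,(k')_{d-2}),\,(k_2,\dots,k_{d-2})}^{d-1,\,k_1}(h)$; the Kronecker delta forces $(k')_1=k_1=:m$, and since $\zeta_{K,k}^{d,n}=0$ whenever $k_1>K$ while automatically $k_1\le n$ for $k\in\mathcal{I}_n^d$, the value $m$ ranges only over $0,1,\dots,\min(K,n)$. Relabelling $k=(m,k)$, $k'=(m,k')$ with $k,k'\in\mathcal{I}_m^{d-1}$ then yields exactly the stated identity. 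For $d=3$, where $\mathcal{I}_n^3=\{-n,\dots,n\}$, \hyperref[lemma2]{Lemma~\ref*{lemma2}} gives $t_{k',k}^{3,n}(h(\gamma))=\delta_{k',k}\,\mathrm{e}^{\mathrm{i}k\gamma}$, so the inner sum collapses to $\sum_k|\zeta_{K,k}^{3,n}|^2\,\mathrm{e}^{\mathrm{i}k\gamma}$, with $k$ restricted to $-\min(K,n),\dots,\min(K,n)$ by the same vanishing property of the directionality components.

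No genuine obstacle is expected here: the argument is bookkeeping resting on orthonormality of the spherical harmonics and the restriction identity of \hyperref[lemma2]{Lemma~\ref*{lemma2}}. The step most prone to slips is keeping the two indices of $t_{k',k}^{d,n}$ in their correct slots after applying $T^d(h)$ and conjugating $\Psi_{\scriptscriptstyle K}^{\scriptscriptstyle N}$, together with correctly converting the support constraint on the $\zeta_{K,\cdot}^{d,n}$ into the truncation of the $m$-summation (resp. the $k$-summation when $d=3$) at $\min(K,n)$.
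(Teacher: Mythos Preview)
Your argument is correct and matches the paper's approach exactly: the paper does not write out a proof but simply states that \hyperref[lemma2]{Lemma~\ref*{lemma2}} yields the representation, and your expansion via orthonormality of the $Y_k^{d,n}$ followed by the restriction identity of that lemma is precisely what is intended. The bookkeeping of the indices in $t_{k',k}^{d,n}$ and the truncation at $\min(K,n)$ are handled correctly.
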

\noindent Note that due to the restrictions on the directionality components $\zeta_{K,k }^{d, n}$,  the auto-correlation function is always a polynomial of degree $K$ and does not change its structure when $N$ increases beyond $2K$. Hence, the wavelets $\Psi_{\scriptscriptstyle K}^{\scriptscriptstyle N}$ are limited in their directional sensitivity. To obtain wavelets with optimal directionality, the components $\zeta_{K, k}^{d, n}$ should be chosen such that the auto-correlation function is well localized at the identity. In \autoref{sec6}, the latter will be discussed explicitly for the case of highly symmetric frames.

We conclude this section by mentioning that our wavelets are steerable, a property which has been discussed previously in the two-dimen\-sional setting \citep{bib21}. It states that there exist fixed orientations $h_1, ..., h_M$ $\in SO(d-1)$ and polynomials $v_1, ..., v_M$ on $SO(d-1)$ such that
\begin{equation}\label{eq27}
T^d(h)\Psi_{\scriptscriptstyle K}^{\scriptscriptstyle N} = \sum_{p=1}^M v_p(h) \, T^d(h_p)\Psi_{\scriptscriptstyle K}^{\scriptscriptstyle N}, \quad \text{for all } h \in SO(d-1), \;N \in \mathbb{N}.
\end{equation}
To obtain a representation of the form \eqref{eq27} for $d>3$, one can start with some arbitrary quadrature rule satisfying
\begin{equation*}
\sum_{p=1}^M w_p f(h_p) = \int_{SO(d-1)} f \, \mathrm{d}\mu_{d-1} \quad \text{for all } f \in \Pi_{2K}(SO(d-1)).
\end{equation*}
The corresponding polynomials $v_p$, $p=1, ..., M$, can then be defined as
\begin{equation*}
v_p(h) = w_p \sum_{n=0}^K \dim \mathcal{H}_n^{d-1} \sum_{k, k' \in \mathcal{I}_n^{d-1}} \overline{t_{k, k'}^{d-1, n}(h_p)} \, t_{k, k'}^{d-1, n}(h)
\end{equation*}
and a straightforward calculation confirms the equality \eqref{eq27}. In particular, $v_1, ..., v_M \in \Pi_K(SO(d-1))$.

\section{Convergence in Banach spaces}\label{sec5}
Let the directional wavelets $\Psi_{\scriptscriptstyle K}^{j, \ell, m}$ be as constructed in \autoref{sec3} with $\kappa, \phi \in C^\infty([0, \infty))$. For $ 1\leq p \leq \infty$, we consider the family of operators $\Lambda_{J, \Omega} \colon L^p(\mathbb{S}^{d-1}) \rightarrow \Pi_{2^{J+1}}(\mathbb{S}^{d-1})$, $J \in \mathbb{N}_0$, defined by
\begin{align*}
\Lambda_{J, \Omega}f  = \sum_{j=0}^J \sum_{\ell=1}^{s_{j}} \sum_{m=1}^{r_j} \langle f,\Psi_{\scriptscriptstyle K}^{j, \ell, m}\rangle_{\mathbb{S}^{d-1}}\Psi_{\scriptscriptstyle K}^{j, \ell, m} + \sum_{(\ell, m)\in \Omega} \langle f,\Psi_{\scriptscriptstyle K}^{J+1, \ell, m}\rangle_{\mathbb{S}^{d-1}}\Psi_{\scriptscriptstyle K}^{J+1, \ell, m},
\end{align*}
where $\Omega \subset \{ 1, ..., s_{J+1}\} \times \{ 1, ..., r_{J+1} \}$. Such operators were previously studied for certain isotropic polynomial frames on $\mathbb{S}^{2}$ in \citep{bib29}.

\begin{lemma}\label{lemma5}
For $j \in \mathbb{N}$ it holds that
\begin{align}\label{eq4}
 \sum_{\ell=1}^{s_{j}} \sum_{m=1}^{r_j} \langle f,\Psi_{\scriptscriptstyle K}^{j, \ell, m}\rangle_{\mathbb{S}^{d-1}}\Psi_{\scriptscriptstyle K}^{j, \ell, m} = \sum_{n=0}^{\infty} \sum_{k \in  \mathcal{I}_{n}^d} \kappa^2\!\left(\frac{n}{2^{j-1}}\right) \langle f, Y_k^{d,n}\rangle_{\mathbb{S}^{d-1}} Y_k^{d,n}.
\end{align}
Furthermore, if $J \in \mathbb{N}_0$, we have
\begin{align*}
&\sum_{j=0}^J \sum_{\ell=1}^{s_{j}} \sum_{m=1}^{r_j} \langle f,\Psi_{\scriptscriptstyle K}^{j, \ell, m}\rangle_{\mathbb{S}^{d-1}}\Psi_{\scriptscriptstyle K}^{j, \ell, m} = \sum_{n=0}^{\infty}\sum_{k \in  \mathcal{I}_{n}^d} \phi^2\!\left(\frac{n}{2^J}\right) \langle f, Y_k^{d,n}\rangle_{\mathbb{S}^{d-1}} Y_k^{d,n}.
\end{align*}
\end{lemma}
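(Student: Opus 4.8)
The plan is to establish \eqref{eq4} first and then deduce the second identity by telescoping. For the first part I would fix $j\in\mathbb{N}$ and $\eta\in\mathbb{S}^{d-1}$ and unwind the definition of $\Psi_{\scriptscriptstyle K}^{j, \ell, m}$, which gives
\begin{align*}
\langle f,\Psi_{\scriptscriptstyle K}^{j, \ell, m}\rangle_{\mathbb{S}^{d-1}}\,\Psi_{\scriptscriptstyle K}^{j, \ell, m}(\eta) &= \omega_{j, \ell}\,\mu_{j, m}\, \langle f, T^d(g_{\eta_{j, \ell}}h_{j, m})\Psi_{\scriptscriptstyle K}^{j}\rangle_{\mathbb{S}^{d-1}} \\
&\quad \times T^d(g_{\eta_{j, \ell}}h_{j, m})\Psi_{\scriptscriptstyle K}^{j}(\eta).
\end{align*}
As recalled immediately before the definition of $\Psi_{\scriptscriptstyle K}^{j, \ell, m}$, both factors $f_1\colon g\mapsto \langle f, T^d(g)\Psi_{\scriptscriptstyle K}^{j}\rangle_{\mathbb{S}^{d-1}}$ and $f_2\colon g\mapsto T^d(g)\Psi_{\scriptscriptstyle K}^{j}(\eta)$ lie in $\mathcal{M}_{2^j, K}^1(SO(d))$, so \hyperref[lemma6]{Lemma~\ref*{lemma6}} applies to $f_1 f_2$ and turns $\sum_{\ell, m}\langle f,\Psi_{\scriptscriptstyle K}^{j, \ell, m}\rangle_{\mathbb{S}^{d-1}}\Psi_{\scriptscriptstyle K}^{j, \ell, m}(\eta)$ into the integral $\int_{SO(d)}\langle f, T^d(g)\Psi_{\scriptscriptstyle K}^{j}\rangle_{\mathbb{S}^{d-1}}\,T^d(g)\Psi_{\scriptscriptstyle K}^{j}(\eta)\,\mathrm{d}\mu_d(g)$. (Here $f\in L^1(\mathbb{S}^{d-1})$ suffices for all inner products to make sense, which is guaranteed by $f\in L^p(\mathbb{S}^{d-1})$.)

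The next step is to evaluate that integral explicitly. Substituting the expansion \eqref{eq102} of $T^d(g)\Psi_{\scriptscriptstyle K}^{j}$, together with its conjugate for the first factor, turns the integrand into a \emph{finite} double sum of products $\overline{t_{k, k'}^{d, n}(g)}\,t_{\ell, \ell'}^{d, n'}(g)$ — finite because $\supp\kappa\subset[1/2, 2]$, so in particular no convergence issue arises and $\Psi_{\scriptscriptstyle K}^{j}\in\Pi_{2^j}(\mathbb{S}^{d-1})$. Integrating term by term and using the orthogonality of the matrix coefficients with the normalization \eqref{eq23}, only the diagonal terms $n=n'$, $k=\ell$, $k'=\ell'$ survive, each weighted by $(\dim\mathcal{H}_n^d)^{-1}$; this cancels the factor $\dim\mathcal{H}_n^d$ in \eqref{eq102}. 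Summing the remaining $\lvert\zeta_{K, k'}^{d, n}\rvert^2$ over $k'\in\mathcal{I}_n^d$ and invoking \eqref{eq1} leaves exactly the right-hand side of \eqref{eq4}; the $n=0$ term is absent since $\kappa(0)=0$.

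For the second identity I would handle $j=0$ separately: by construction $\Psi_{\scriptscriptstyle K}^{0, 1, 1}\equiv 1 = Y_0^{d, 0}$, so the $j=0$ summand is $\langle f, Y_0^{d, 0}\rangle_{\mathbb{S}^{d-1}}Y_0^{d, 0}$, i.e.\ the contribution of $n=0$. Writing $\kappa^2(n/2^{j-1})=\phi^2(n/2^j)-\phi^2(n/2^{j-1})$ via \eqref{eqkappa} and summing \eqref{eq4} over $j=1,\dots,J$, the right-hand sides telescope to $\phi^2(n/2^J)-\phi^2(n)$ for each fixed $n$. Since $\phi\equiv 1$ on $[0,1/2]$ and $\phi\equiv 0$ on $[1,\infty)$, one has $\phi^2(n)=\delta_{n, 0}$, so adding the $j=0$ term exactly restores the missing $\phi^2(0)$ and produces $\phi^2(n/2^J)$, which is the claimed identity.

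The whole argument is essentially bookkeeping and I do not expect a genuine obstacle. The one point requiring care is the honest evaluation of the $SO(d)$-integral in the second paragraph: correctly tracking the index collapse forced by orthogonality and assembling $\sum_{k'\in\mathcal{I}_n^d}\lvert\zeta_{K, k'}^{d, n}\rvert^2 = 1$. A secondary subtlety is the treatment of the edge cases $n=0$ and $j=0$, which must be dovetailed so that the term $\phi^2(n)=\delta_{n, 0}$ is properly accounted for in the telescoping sum.
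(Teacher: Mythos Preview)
Your proposal is correct and follows essentially the same route as the paper: apply \hyperref[lemma6]{Lemma~\ref*{lemma6}} to rewrite the quadrature sum as $\int_{SO(d)}\langle f,T^d(g)\Psi_{\scriptscriptstyle K}^{j}\rangle\,T^d(g)\Psi_{\scriptscriptstyle K}^{j}(\eta)\,\mathrm{d}\mu_d(g)$, then expand via \eqref{eq102} and use the orthogonality relations together with \eqref{eq1}; the second identity is obtained from \eqref{eq4} by the telescoping argument via \eqref{eqkappa}, exactly as you outline. Your treatment of the $j=0$ term and the $n=0$ bookkeeping is more explicit than the paper's, but the content is the same.
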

\begin{proof}
For $j \in \mathbb{N}$, the definition of the wavelets $\Psi_{\scriptscriptstyle K}^{j, \ell, m}$ yields
\begin{align*}
& \sum_{\ell=1}^{s_{j}} \sum_{m=1}^{r_j} \langle f,\Psi_{\scriptscriptstyle K}^{j, \ell, m}\rangle_{\mathbb{S}^{d-1}}\Psi_{\scriptscriptstyle K}^{j, \ell, m}  \\
& \quad \qquad = \sum_{\ell=1}^{s_{j}} \sum_{m=1}^{r_j} \omega_{j, \ell} \, \mu_{j,m} \langle f, T^d(g_{\eta_{j, \ell}}h_{j,m}) \Psi_{\scriptscriptstyle K}^j \rangle_{\mathbb{S}^{d-1}} T^d(g_{\eta_{j, \ell}}h_{j,m}) \Psi_{\scriptscriptstyle K}^j.
\end{align*}
Thus, by \hyperref[lemma6]{Lemma~\ref*{lemma6}} we have
\begin{align*}
\sum_{\ell=1}^{s_{j}} \sum_{m=1}^{r_j} \langle f,\Psi_{\scriptscriptstyle K}^{j, \ell, m}\rangle_{\mathbb{S}^{d-1}}\Psi_{\scriptscriptstyle K}^{j, \ell, m}(\eta)  = \int_{SO(d)} \langle f, T^d(g) \Psi_{\scriptscriptstyle K}^j \rangle_{\mathbb{S}^{d-1}} T^d(g) \Psi_{\scriptscriptstyle K}^j(\eta) \, \mathrm{d}\mu_{d}(g)
\end{align*}
for every $\eta \in \mathbb{S}^{d-1}$. Equation \eqref{eq4} now follows easily from \eqref{eq102} and from the orthogonality relations of the matrix coefficients $t_{k, k'}^{d, n}$.

The second statement follows immediately from \eqref{eq4} due to \eqref{eqkappa}.
\end{proof}

In view of \hyperref[lemma5]{Lemma~\ref*{lemma5}}, the approximant $\Lambda_{J, \Omega}f$ can be written in the form
\begin{align*}
\Lambda_{J, \Omega}f =  \sum_{n=0}^{\infty}\sum_{k \in  \mathcal{I}_{n}^d} \phi^2\!\left(\frac{n}{2^J}\right) \langle f, Y_k^{d,n}\rangle_{\mathbb{S}^{d-1}} Y_k^{d,n}  + \sum_{(\ell, m)\in \Omega} \langle f,\Psi_{\scriptscriptstyle K}^{J+1, \ell, m}\rangle_{\mathbb{S}^{d-1}}\Psi_{\scriptscriptstyle K}^{J+1, \ell, m},
\end{align*}
or, even more compact,
\begin{equation}\label{eq104}
\Lambda_{J, \Omega}f = f\ast\Phi^J  + \sum_{(\ell, m)\in \Omega} \langle f,\Psi_{\scriptscriptstyle K}^{J+1, \ell, m}\rangle_{\mathbb{S}^{d-1}}\Psi_{\scriptscriptstyle K}^{J+1, \ell, m},
\end{equation}
where 
\begin{equation*}
f\ast \Phi^J(\eta) = \langle f, T^d(g_\eta)\Phi^J \rangle_{\mathbb{S}^{d-1}}
\end{equation*}
is the standard spherical convolution of $f \in L^1(\mathbb{S}^{d-1})$ with the isotropic scaling function $\Phi^J$ defined in \eqref{eq25}. Clearly,
\begin{equation*}
\Lambda_{J, \Omega}f = f \quad \text{for all } f \in \Pi_{2^{J-1}}(\mathbb{S}^{d-1}).
\end{equation*}

For the remainder of this section we will make the following assumption on the quadrature measure from \eqref{eq15}.
\begin{assumption}\label{assumption1}
There exist points $\tilde{h}_{j, m}\in SO(d-1)$ and positive weights $\tilde{\mu}_{j, m}$ such that
\begin{equation}\label{eq20}
\sum_{m =1}^{\tilde{r}_j} \tilde{\mu}_{j,m} \, \lvert f(\tilde{h}_{j, m}) \rvert \sim \int_{SO(d-1)} \lvert f \rvert \, \mathrm{d}\mu_{d-1}, \quad  f \in \mathcal{M}_{\min(K, 2^j)}^1(SO(d-1)),
\end{equation}
and
\begin{equation}\label{eq19}
\sum_{\ell=1}^{s_{j}}\sum_{m=1}^{\tilde{r}_j} \omega_{j, \ell} \, \tilde{\mu}_{j, m} \, \lvert f(g_{\eta_{j, \ell}} \tilde{h}_{j, m}) \rvert \leq c \int_{SO(d)} \lvert f \rvert \, \mathrm{d}\mu_{d}
\end{equation}
for any $f \in \mathcal{M}_{ 2^{j}, K}^1(SO(d))$. Here, the constants of equivalence in \eqref{eq20}, as well as the value $c>0$, are assumed to be independent of $j\in \mathbb{N}$.
\end{assumption}

\begin{lemma}
We have
\begin{equation}\label{eq16}
\sum_{\ell=1}^{s_{j}}\sum_{m=1}^{r_j} \omega_{j, \ell} \, \mu_{j, m} \, \lvert f(g_{\eta_{j, \ell}} h_{j, m}) \rvert \leq c \int_{SO(d)} \lvert f \rvert \, \mathrm{d}\mu_{d}
\end{equation}
for every $f \in \mathcal{M}_{2^j, K}(SO(d))$.  Here, $c>0$ is independent of $j\in \mathbb{N}$.
\end{lemma}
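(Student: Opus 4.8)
The plan is to collapse the double sum over $SO(d)$ onto the inner sum over the embedded copy of $SO(d-1)$ and there exploit that the directionality is capped by the \emph{fixed} constant $K$. For $\eta\in\mathbb{S}^{d-1}$ write $F_\eta(h):=f(g_\eta h)$. Since $f\in\mathcal{M}_{2^j,K}^1(SO(d))$, \hyperref[lemma2.3]{Proposition~\ref*{lemma2.3}} shows $F_\eta\in\mathcal{M}_K^1(SO(d-1))$, and because $f$ has polynomial degree at most $2^j$ we in fact get $F_\eta\in\mathcal{M}_{\min(K,2^j)}^1(SO(d-1))$, a subspace of the fixed, finite-dimensional space $\mathcal{M}_K^1(SO(d-1))$. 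Then
\begin{equation*}
\sum_{\ell=1}^{s_j}\sum_{m=1}^{r_j}\omega_{j,\ell}\,\mu_{j,m}\,\lvert f(g_{\eta_{j,\ell}}h_{j,m})\rvert=\sum_{\ell=1}^{s_j}\omega_{j,\ell}\sum_{m=1}^{r_j}\mu_{j,m}\,\lvert F_{\eta_{j,\ell}}(h_{j,m})\rvert,
\end{equation*}
and the task reduces to estimating the inner sum by a multiple of $\lVert F_{\eta_{j,\ell}}\rVert_{L^1(SO(d-1))}$ with a constant independent of $j$.

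For the inner sum I would argue as in the proof of \hyperref[lemma6]{Lemma~\ref*{lemma6}}: the quadrature rule $(h_{j,m},\mu_{j,m})$ is exact for products of functions in $\mathcal{M}_{\min(K,2^j)}^1(SO(d-1))$, so with $f_1=F_\eta$, $f_2=\overline{F_\eta}$ one gets $\sum_m\mu_{j,m}\lvert F_\eta(h_{j,m})\rvert^2=\lVert F_\eta\rVert_{L^2(SO(d-1))}^2$, and with $f_1=f_2\equiv1$ one gets $\sum_m\mu_{j,m}=1$. Cauchy--Schwarz then yields $\sum_m\mu_{j,m}\lvert F_\eta(h_{j,m})\rvert\le\lVert F_\eta\rVert_{L^2(SO(d-1))}$. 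Finally, since $\mathcal{M}_K^1(SO(d-1))$ is finite-dimensional and does not depend on $j$, all norms on it are equivalent; in particular there is a fixed $c$ with $\lVert G\rVert_{L^2(SO(d-1))}\le c\,\lVert G\rVert_{L^1(SO(d-1))}$ for all $G\in\mathcal{M}_K^1(SO(d-1))$, hence for all $F_\eta$. This gives $\sum_m\mu_{j,m}\lvert F_{\eta_{j,\ell}}(h_{j,m})\rvert\le c\,\lVert F_{\eta_{j,\ell}}\rVert_{L^1(SO(d-1))}$ with $c$ uniform in $j$.

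It remains to sum over $\ell$ and to pass from the $L^1$-norms on $SO(d-1)$ back to an integral over $SO(d)$, which is exactly what \hyperref[assumption1]{Assumption~\ref*{assumption1}} provides. By the lower bound in \eqref{eq20}, $\lVert F_{\eta_{j,\ell}}\rVert_{L^1(SO(d-1))}\le c\sum_{m}\tilde\mu_{j,m}\lvert F_{\eta_{j,\ell}}(\tilde h_{j,m})\rvert=c\sum_{m}\tilde\mu_{j,m}\lvert f(g_{\eta_{j,\ell}}\tilde h_{j,m})\rvert$, and then \eqref{eq19} gives
\begin{equation*}
\sum_{\ell=1}^{s_j}\omega_{j,\ell}\sum_{m=1}^{\tilde r_j}\tilde\mu_{j,m}\,\lvert f(g_{\eta_{j,\ell}}\tilde h_{j,m})\rvert\le c\int_{SO(d)}\lvert f\rvert\,\mathrm{d}\mu_d,
\end{equation*}
since $f\in\mathcal{M}_{2^j,K}^1(SO(d))$. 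Chaining the three estimates proves \eqref{eq16} with a constant depending only on $d$, $K$, and the constants appearing in \hyperref[assumption1]{Assumption~\ref*{assumption1}}.

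The main obstacle is the uniformity in $j$: one must check that neither the quadrature-exactness step nor the Nikolskii-type inequality degrades as $j\to\infty$. This is precisely where the bounded directionality is used --- because $\min(K,2^j)\le K$, every $F_{\eta}$ lives in the single fixed space $\mathcal{M}_K^1(SO(d-1))$, so the constants produced along the way are genuinely independent of $j$; for the finitely many scales with $2^j<K$ one simply takes a maximum. A minor secondary point is the membership $F_\eta\in\mathcal{M}_{\min(K,2^j)}^1(SO(d-1))$, which combines \hyperref[lemma2.3]{Proposition~\ref*{lemma2.3}} with the degree bound on $f$ and with the fact that $\mathcal{M}^1_{\min(K,2^j)}(SO(d-1))$ is closed under complex conjugation (needed to apply the exact quadrature to $\lvert F_\eta\rvert^2$).
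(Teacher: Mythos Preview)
Your proof is correct and follows essentially the same route as the paper: reduce the inner sum to the fixed finite-dimensional space $\mathcal{M}_{K}^1(SO(d-1))$, bound it by a constant times $\lVert F_\eta\rVert_{L^1(SO(d-1))}$, and then invoke \eqref{eq20} and \eqref{eq19} from \hyperref[assumption1]{Assumption~\ref*{assumption1}} exactly as you do. The only cosmetic difference is that the paper collapses your Cauchy--Schwarz/$L^2$ detour into a single ``trivial'' step, directly using that $F\mapsto\sum_m\mu_{j,m}\lvert F(h_{j,m})\rvert$ is a continuous seminorm on the fixed finite-dimensional space and hence dominated by $\lVert\cdot\rVert_{L^1}$.
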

\begin{proof}
Since $h \mapsto f(g_{\eta_{j, \ell}} h)$ is a function in $\mathcal{M}_{\min(K, 2^j)}(SO(d-1))$ by \hyperref[lemma2.3]{Proposition~\ref*{lemma2.3}},
\begin{equation*}
\sum_{m=1}^{r_j} \mu_{j, m} \, \lvert f(g_{\eta_{j, \ell}} h_{j, m}) \rvert\leq c \int_{SO(d-1)} \lvert f(g_{\eta_{j, \ell}} h)\rvert \, \mathrm{d}\mu_{d-1}(h)
\end{equation*}
holds trivially. Additionally, \eqref{eq20} yields
\begin{equation*}
\int_{SO(d-1)} \lvert f(g_{\eta_{j, \ell}} h)\rvert \, \mathrm{d}\mu_{d-1}(h) \leq c \sum_{m =1}^{\tilde{r}_j} \tilde{\mu}_{j,m} \, \lvert f(g_{\eta_{j, \ell}} \tilde{h}_{j, m}) \rvert.
\end{equation*}
Hence, combining the above inequalities and using \eqref{eq19} we get
\begin{align*}
\sum_{\ell=1}^{s_{j}}\sum_{m=1}^{r_j} \omega_{j, \ell} \, \mu_{j, m} \, \lvert f(g_{\eta_{j, \ell}} h_{j, m}) \rvert \leq \sum_{\ell=1}^{s_{j}}\sum_{m=1}^{\tilde{r}_j} \omega_{j, \ell} \, \tilde{\mu}_{j, m} \, \lvert f(g_{\eta_{j, \ell}} \tilde{h}_{j, m}) \rvert \leq c \int_{SO(d)} \lvert f \rvert \, \mathrm{d}\mu_{d}.
\end{align*}
\end{proof}

\begin{proposition}\label{prop1}
For $1\leq p \leq \infty$ it holds that 
\begin{equation*}
\|\Lambda_{J, \Omega}f \|_{ L^p(\mathbb{S}^{d-1})} \leq c_p \|f \|_{L^p(\mathbb{S}^{d-1})}.
\end{equation*}
\end{proposition}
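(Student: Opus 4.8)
The plan is to exploit the compact representation \eqref{eq104}, which writes $\Lambda_{J,\Omega}f = f\ast\Phi^J + R_\Omega f$ with $R_\Omega f = \sum_{(\ell,m)\in\Omega}\langle f,\Psi_{\scriptscriptstyle K}^{J+1,\ell,m}\rangle_{\mathbb{S}^{d-1}}\,\Psi_{\scriptscriptstyle K}^{J+1,\ell,m}$, and to bound each of the two pieces on $L^1(\mathbb{S}^{d-1})$ and on $L^\infty(\mathbb{S}^{d-1})$ by a constant independent of $J$ and $\Omega$; the range $1<p<\infty$ then follows at once from the Riesz--Thorin interpolation theorem. The convolution term is immediate: Young's inequality for spherical convolution gives $\|f\ast\Phi^J\|_{L^p(\mathbb{S}^{d-1})}\le\|\Phi^J\|_{L^1(\mathbb{S}^{d-1})}\,\|f\|_{L^p(\mathbb{S}^{d-1})}$, and since $\Phi^J$ is an isotropic kernel of exactly the type treated in \autoref{theorem1} with $K=0$ (with $\phi^2$ playing the role of $\kappa$), the assumption $\phi\in C^\infty$ yields $\lvert\Phi^J(\eta)\rvert\le c_q\,2^{J(d-1)}\bigl(1+2^J\dist(\eta,e^d)\bigr)^{-q}$ for every $q$, hence $\|\Phi^J\|_{L^1(\mathbb{S}^{d-1})}\le c$ uniformly in $J$.

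The substantial part is the uniform boundedness of $R_\Omega$, and the decisive tool is the Marcinkiewicz--Zygmund-type inequality \eqref{eq16}, which rests on \autoref{assumption1}. Write $j=J+1$ and recall $\Psi_{\scriptscriptstyle K}^{j,\ell,m}=\sqrt{\omega_{j,\ell}\mu_{j,m}}\,T^d(g_{\eta_{j,\ell}}h_{j,m})\Psi_{\scriptscriptstyle K}^j$. For the $L^1$-bound, the triangle inequality, the rotation invariance of the $L^1$-norm, and $\|\Psi_{\scriptscriptstyle K}^j\|_{L^1(\mathbb{S}^{d-1})}\le c$ (which holds by \autoref{corollary1} with $p=1$, since $\kappa\in C^\infty$) reduce the claim to
\begin{equation*}
\sum_{\ell=1}^{s_j}\sum_{m=1}^{r_j}\omega_{j,\ell}\,\mu_{j,m}\,\bigl\lvert\langle f,T^d(g_{\eta_{j,\ell}}h_{j,m})\Psi_{\scriptscriptstyle K}^j\rangle_{\mathbb{S}^{d-1}}\bigr\rvert\le c\,\|f\|_{L^1(\mathbb{S}^{d-1})},
\end{equation*}
where the restriction to $\Omega$ has already been discarded, this being harmless as the weights are positive. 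Now $g\mapsto\langle f,T^d(g)\Psi_{\scriptscriptstyle K}^j\rangle_{\mathbb{S}^{d-1}}$ lies in $\mathcal{M}_{2^j,K}^1(SO(d))$, so \eqref{eq16} dominates the left-hand side by $c\int_{SO(d)}\lvert\langle f,T^d(g)\Psi_{\scriptscriptstyle K}^j\rangle_{\mathbb{S}^{d-1}}\rvert\,\mathrm{d}\mu_d(g)$; estimating $\lvert\langle f,T^d(g)\Psi_{\scriptscriptstyle K}^j\rangle_{\mathbb{S}^{d-1}}\rvert\le\int_{\mathbb{S}^{d-1}}\lvert f(\nu)\rvert\,\lvert\Psi_{\scriptscriptstyle K}^j(g^{-1}\nu)\rvert\,\mathrm{d}\omega_{d-1}(\nu)$, applying Fubini, and using $\int_{SO(d)}\lvert\Psi_{\scriptscriptstyle K}^j(g^{-1}\nu)\rvert\,\mathrm{d}\mu_d(g)=\|\Psi_{\scriptscriptstyle K}^j\|_{L^1(\mathbb{S}^{d-1})}\le c$ for each fixed $\nu$ (a consequence of \eqref{eq2} and the invariance of the Haar measure), one obtains exactly the bound $c\,\|f\|_{L^1(\mathbb{S}^{d-1})}$.

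For the $L^\infty$-bound one argues pointwise: for fixed $\eta\in\mathbb{S}^{d-1}$,
\begin{equation*}
\lvert R_\Omega f(\eta)\rvert\le\|f\|_{L^\infty(\mathbb{S}^{d-1})}\,\|\Psi_{\scriptscriptstyle K}^j\|_{L^1(\mathbb{S}^{d-1})}\sum_{\ell=1}^{s_j}\sum_{m=1}^{r_j}\omega_{j,\ell}\,\mu_{j,m}\,\bigl\lvert T^d(g_{\eta_{j,\ell}}h_{j,m})\Psi_{\scriptscriptstyle K}^j(\eta)\bigr\rvert,
\end{equation*}
and since $g\mapsto T^d(g)\Psi_{\scriptscriptstyle K}^j(\eta)$ belongs to $\mathcal{M}_{2^j,K}^1(SO(d))$, \eqref{eq16} dominates the last sum by $c\int_{SO(d)}\lvert T^d(g)\Psi_{\scriptscriptstyle K}^j(\eta)\rvert\,\mathrm{d}\mu_d(g)=c\,\|\Psi_{\scriptscriptstyle K}^j\|_{L^1(\mathbb{S}^{d-1})}\le c$, uniformly in $\eta$ and $j$. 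Combining the three estimates we get $\|\Lambda_{J,\Omega}\|_{L^1\to L^1}\le c$ and $\|\Lambda_{J,\Omega}\|_{L^\infty\to L^\infty}\le c$ with constants independent of $J$ and $\Omega$, and Riesz--Thorin interpolation yields the asserted bound for all $1\le p\le\infty$. The main obstacle is precisely the step in which the quadrature sums of the \emph{absolute values} of polynomials on $SO(d)$ are controlled by their integrals, with a constant independent of the scale $j$; this is exactly what \autoref{assumption1} (through \eqref{eq16}) is designed to supply, so the only remaining work is the careful bookkeeping of the fibration \eqref{eq2}, Fubini's theorem, and the rotation invariance of $\|\Psi_{\scriptscriptstyle K}^j\|_{L^1}$.
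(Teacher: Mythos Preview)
Your proof is correct and follows essentially the same route as the paper: split $\Lambda_{J,\Omega}f$ via \eqref{eq104}, handle the convolution part by the standard bound for the isotropic scaling kernel, prove the $L^1$- and $L^\infty$-endpoint bounds for the remainder using $\|\Psi_{\scriptscriptstyle K}^{j}\|_{L^1(\mathbb{S}^{d-1})}\le c$ from \autoref{corollary1} together with the Marcinkiewicz--Zygmund inequality \eqref{eq16}, and then interpolate by Riesz--Thorin. The only cosmetic difference is that for $p=1$ you apply \eqref{eq16} to $g\mapsto\langle f,T^d(g)\Psi_{\scriptscriptstyle K}^{j}\rangle_{\mathbb{S}^{d-1}}$ whereas the paper applies it to $g\mapsto T^d(g)\Psi_{\scriptscriptstyle K}^{j}(\nu)$ inside the kernel representation \eqref{eq9}; the two orderings are equivalent.
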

\begin{proof}
It is well known, that $\| f \ast \Phi^J\|_{L^p(\mathbb{S}^{d-1})} \leq c_p \|f \|_{L^p(\mathbb{S}^{d-1})}$. For a proof of the latter, we refer to \citep[Theorem 2.6.3]{bib3}. Consequently, by \eqref{eq104} it suffices to show that
\begin{equation*}
\left \| \sum_{(\ell, m)\in \Omega} \langle f,\Psi_{\scriptscriptstyle K}^{J+1, \ell, m}\rangle_{\mathbb{S}^{d-1}}\Psi_{\scriptscriptstyle K}^{J+1, \ell, m} \right\|_{L^p(\mathbb{S}^{d-1})} \leq c_p \|f \|_{L^p(\mathbb{S}^{d-1})}.
\end{equation*}
For this, we will use of the equality
\begin{align}\label{eq9}
&\sum_{(\ell, m)\in \Omega} \langle f,\Psi_{\scriptscriptstyle K}^{J+1, \ell, m}\rangle_{\mathbb{S}^{d-1}}\Psi_{\scriptscriptstyle K}^{J+1, \ell, m}(\eta) \nonumber \\
& = \int_{\mathbb{S}^{d-1}} f(\nu) \sum_{(\ell, m)\in \Omega} \overline{\Psi_{\scriptscriptstyle K}^{J+1, \ell, m}(\nu)}\,  \Psi_{\scriptscriptstyle K}^{J+1, \ell, m}(\eta) \, \mathrm{d}\omega_{d-1}(\nu), \quad \eta \in \mathbb{S}^{d-1}.
\end{align}
\textbf{$p=1$:} By \eqref{eq9}, we have
\begin{align*}
& \left \| \sum_{(\ell, m)\in \Omega} \langle f,\Psi_{\scriptscriptstyle K}^{J+1, \ell, m}\rangle_{\mathbb{S}^{d-1}}\Psi_{\scriptscriptstyle K}^{J+1, \ell, m} \right\|_{L^1(\mathbb{S}^{d-1})} \leq \\
&  \qquad \int_{\mathbb{S}^{d-1}} \lvert f(\nu) \rvert \int_{\mathbb{S}^{d-1}} \sum_{(\ell, m)\in \Omega} \lvert \Psi_{\scriptscriptstyle K}^{J+1, \ell, m}(\nu) \, \Psi_{\scriptscriptstyle K}^{J+1, \ell, m}(\eta) \rvert \, \mathrm{d}\omega_{d-1}(\eta) \, \mathrm{d}\omega_{d-1}(\nu).
\end{align*}
Furthermore, 
\begin{align*}
 &\sum_{(\ell, m)\in \Omega} \lvert \Psi_{\scriptscriptstyle K}^{J+1, \ell, m}(\nu) \, \Psi_{\scriptscriptstyle K}^{J+1, \ell, m}(\eta) \rvert\\
& \qquad  \leq \sum_{\ell =1}^{s_{J+1}} \sum_{m=1}^{r_{J+1}} \omega_{J+1, \ell}\, \mu_{J+1, m} \,\lvert T^d(g_{\eta_{J+1, \ell}} h_{J+1, m})\Psi_{\scriptscriptstyle K}^{J+1}(\nu) \, T^d(g_{\eta_{J+1, \ell}} h_{J+1, m})\Psi_{\scriptscriptstyle K}^{J+1}(\eta)\rvert.
\end{align*}
By \hyperref[corollary1]{Corollary~\ref*{corollary1}},
\begin{equation*}
\int_{\mathbb{S}^{d-1}} \lvert T^d(g_{\eta_{J+1, \ell}} h_{J+1, m})\Psi_{\scriptscriptstyle K}^{J+1}(\eta)\rvert \, \mathrm{d}\omega_{d-1}(\eta) \leq c.
\end{equation*}
Additionally, it follows from \eqref{eq16} that
\begin{align*}
& \sum_{\ell =1}^{s_{J+1}} \sum_{m=1}^{r_{J+1}} \omega_{J+1, \ell}\, \mu_{J+1, m} \,\lvert T^d(g_{\eta_{J+1, \ell}} h_{J+1, m})\Psi_{\scriptscriptstyle K}^{J+1}(\nu)\rvert\\
& \qquad \qquad  \leq c \int_{SO(d)}  \lvert T^d(g)\Psi_{\scriptscriptstyle K}^{J+1}(\nu)\rvert \, \mathrm{d}\mu_{d}(g) \leq c.
\end{align*}
\textbf{$p=\infty$:} Starting again from \eqref{eq9} and using similar arguments as before, we obtain
\begin{equation*}
\bigg\lvert \sum_{(\ell, m)\in \Omega} \langle f,\Psi_{\scriptscriptstyle K}^{J+1, \ell, m}\rangle_{\mathbb{S}^{d-1}}\Psi_{\scriptscriptstyle K}^{J+1, \ell, m}(\eta) \bigg\rvert \leq c \|f\|_{L^\infty(\mathbb{S}^{d-1})}.
\end{equation*}
\textbf{$1<p<\infty$:} In this case, the statement follows from the Riesz-Thorin interpolation theorem.
\end{proof}
In the following we will use the notation
\begin{equation*}
E_m(f)_p = \inf_{P \in \Pi_m(\mathbb{S}^{d-1})} \| f-P \|_{L^p(\mathbb{S}^{d-1})}
\end{equation*}
for $f \in L^p(\mathbb{S}^{d-1})$, $1\leq p \leq \infty$. A classical argument shows that any sequence $(\Lambda_{j, \Omega_j}f)_{j=0}^\infty$ with $\Omega_j \subset \{1, ...,s_{j+1} \} \times \{1, ..., r_{j+1} \}$ approximates $f$ with the rate of the best approximation.

\begin{corollary}
Let $f \in L^p(\mathbb{S}^{d-1})$ with $1 \leq p \leq \infty$. Then
\begin{equation*}
\| f- \Lambda_{j, \Omega}f \|_{L^p(\mathbb{S}^{d-1})} \leq c_p E_{2^{j-1}}(f)_p
\end{equation*}
for any $\Omega \subset \{1, ...,s_{j+1} \} \times \{1, ..., r_{j+1} \}$.
\end{corollary}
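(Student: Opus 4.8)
The plan is to use the standard argument that a bounded, degree-reproducing operator approximates with the rate of the best approximation. Concretely, let $P \in \Pi_{2^{j-1}}(\mathbb{S}^{d-1})$ be arbitrary. Since $\Lambda_{j, \Omega}$ reproduces polynomials of degree $\leq 2^{j-1}$ (as noted in the excerpt, $\Lambda_{J, \Omega}f = f$ for $f \in \Pi_{2^{J-1}}(\mathbb{S}^{d-1})$), we have $\Lambda_{j, \Omega}P = P$, and therefore
\begin{equation*}
f - \Lambda_{j, \Omega}f = (f - P) - \Lambda_{j, \Omega}(f-P).
\end{equation*}

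Next I would apply the triangle inequality together with the uniform boundedness of $\Lambda_{j, \Omega}$ on $L^p(\mathbb{S}^{d-1})$ established in \hyperref[prop1]{Proposition~\ref*{prop1}}, which gives
\begin{equation*}
\| f - \Lambda_{j, \Omega}f \|_{L^p(\mathbb{S}^{d-1})} \leq \| f-P \|_{L^p(\mathbb{S}^{d-1})} + \| \Lambda_{j, \Omega}(f-P) \|_{L^p(\mathbb{S}^{d-1})} \leq (1 + c_p) \| f-P \|_{L^p(\mathbb{S}^{d-1})}.
\end{equation*}
Since $P \in \Pi_{2^{j-1}}(\mathbb{S}^{d-1})$ was arbitrary, taking the infimum over all such $P$ yields
\begin{equation*}
\| f - \Lambda_{j, \Omega}f \|_{L^p(\mathbb{S}^{d-1})} \leq (1 + c_p)\, E_{2^{j-1}}(f)_p,
\end{equation*}
which is the claimed bound after renaming the constant.

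There is essentially no obstacle here: the two ingredients — polynomial reproduction and uniform $L^p$-boundedness — are both already available from the preceding material, so the corollary is a purely formal consequence. The only minor point worth checking is that $c_p$ in \hyperref[prop1]{Proposition~\ref*{prop1}} is indeed independent of $j$ and of $\Omega$, which it is, since the proof there bounds $\| f \ast \Phi^J \|_{L^p}$ and the wavelet term uniformly in $J$ and $\Omega$; consequently the resulting constant in the corollary likewise depends only on $p$ (and $d$, $\kappa$, $K$), not on $j$ or $\Omega$.
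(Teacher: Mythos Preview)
Your proof is correct and follows essentially the same approach as the paper: both use the reproduction property $\Lambda_{j,\Omega}P = P$ for $P \in \Pi_{2^{j-1}}(\mathbb{S}^{d-1})$ together with the uniform $L^p$-boundedness from \hyperref[prop1]{Proposition~\ref*{prop1}}. The only cosmetic difference is that the paper picks $P$ to be the best approximation directly, whereas you take $P$ arbitrary and infimize at the end; this changes nothing substantive.
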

\begin{proof}
Let $P$ be the best approximation of $f$ in $\Pi_{2^{j-1}}(\mathbb{S}^{d-1})$ with respect to $\| \cdot \|_{L^p(\mathbb{S}^{d-1})}$, i.e., $\|f-P\|_{L^p(\mathbb{S}^{d-1})} = E_{2^{j-1}}(f)_p$. Then $\Lambda_{j, \Omega} P = P$ and \hyperref[prop1]{Proposition~\ref*{prop1}} yields
\begin{equation*}
\| f- \Lambda_{j, \Omega}f \|_{L^p(\mathbb{S}^{d-1})} = \| f- P + \Lambda_{j, \Omega}(P-f) \|_{L^p(\mathbb{S}^{d-1})} \leq c_p E_{2^{j-1}}(f)_p.
\end{equation*}
\end{proof}

\section{Wavelets with additional symmetries}\label{sec6}
To conclude this article, we will discuss important special cases in which the wavelets constructed exhibit a large amount of symmetries. Indeed, if $m \in \{2, 3, .., d-1\}$ and the directionality components $\zeta_{ K, k}^{d,n}$ vanish whenever $k_{d-m}>0$, then the resulting wavelets $\Psi_{\scriptscriptstyle K}^{j}$ in \eqref{waveletdef} are rotationally invarient with respect to $SO(m)$, i.e., we have $T^d(h)\Psi_{\scriptscriptstyle K}^{j} = \Psi_{\scriptscriptstyle K}^{j}$ for each $h \in SO(m)$. This can easily be seen by starting from the representation in \eqref{eq102} and repeatedly applying \hyperref[lemma2]{Lemma~\ref*{lemma2}} to obtain
\begin{align*}
& T^d(h)\Psi_{\scriptscriptstyle K}^{j} = \sum_{n=0}^\infty \sum_{k, k'\in \mathcal{I}_n^d} \sqrt{\dim \mathcal{H}_n^d} \, \kappa\!\left(\frac{n}{2^{j-1}}\right) \zeta_{ K, k'}^{d,n} \\
& \qquad \qquad \qquad \times \delta_{(k_1, ..., k_{d-m}), (k_1', ..., k_{d-m}')} \, t_{(k_{d-m+1}, ...), (k_{d-m+1}', ...)}^{m, k_{d-m}}(h) \, Y_k^{d, n}
\end{align*}
for $h \in SO(m)$. To further investigate different variants of symmetric wavelets, we introduce the polynomial subspaces
\begin{align*}
&\mathcal{M}_{N, M}^1(SO(d)/SO(m)) \\
& \qquad = \{f \in \mathcal{M}_{N, M}^1(SO(d)) \mid f(g h) = f(g) \text{ for all } g \in SO(d), \; h \in SO(m) \}.
\end{align*}
Clearly, the symmetry
\begin{equation*}
 T^d(h)\Psi_{\scriptscriptstyle K}^{j} = \Psi_{\scriptscriptstyle K}^{j} \qquad \text{if } h \in SO(m)
\end{equation*}
implies that
\begin{equation*}
g \mapsto \langle f, T^d(g)\Psi_{\scriptscriptstyle K}^{j} \rangle_{\mathbb{S}^{d-1}} \in \mathcal{M}_{2^j, K}^1(SO(d)/SO(m))
\end{equation*}
for all $f \in L^1(\mathbb{S}^{d-1})$. Thus, whenever we have quadrature points $g_{j, \ell} \in SO(d)$, $\ell = 1, ..., s_j$, with non-negative weights $\mu_{j, \ell}$ such that
\begin{equation*}
\int_{SO(d)} \lvert  P(g) \rvert^2 \, \mathrm{d}\mu_{d}(g) = \sum_{\ell =1}^{s_j} \mu_{j, \ell}\, \lvert P(g_{j, \ell})\rvert^2
\end{equation*}
if $P \in \mathcal{M}_{2^j, K}^1(SO(d)/SO(m))$, then the wavelets
\begin{equation*}
\Psi_{\scriptscriptstyle K}^{ 0, 1} \equiv  1, \quad \Psi_{\scriptscriptstyle K}^{j, \ell} = \sqrt{\mu_{j,\ell}} \, T^d( g_{j,\ell})\Psi_{\scriptscriptstyle K}^{j}, \qquad  \ell = 1, ..., s_j, \quad j \geq 1,
\end{equation*}
form a tight frame for $L^2(\mathbb{S}^{d-1})$. In particular, compared to the general case discussed in \autoref{sec3}, the amount of quadrature points needed for highly symmetric wavelets can be reduced significantly. In what follows, the two most symmetric cases $m=d-1$ and $m=d-2$ are layed out in detail. Apart from discussing numerical integration, we also give an explicit construction of directionality components which yield wavelets of optimal directionality.

\subsection{The zonal case}\label{subsec6}
We first consider the case $m=d-1$, where we assume the symmetry
\begin{equation*}
T^d(h)\Psi_{\scriptscriptstyle K}^{j} = \Psi_{\scriptscriptstyle K}^{j} \qquad \text{for all } h \in SO(d-1), \; j\in \mathbb{N}_0.
\end{equation*}
This corresponds to the setting of isotropic, or zonal, wavelets which are obtained from our construction by choosing $K=0$. Here, every $P\in \mathcal{M}_{N, 0}^1(SO(d)/SO(d-1))$ can be identified with a polynomial in $\Pi_N(\mathbb{S}^{d-1})$ by \hyperref[lemma4]{Lemma~\ref*{lemma4}}, since
\begin{equation*}
P(g_\eta h) =P(g_\eta) = \int_{SO(d-1)} P(g_\eta h') \, \mathrm{d}\mu_{d-1}(h')
\end{equation*}
for all $h \in SO(d-1)$. Hence, if for each $j \in \mathbb{N}$ we have quadrature points $\eta_{j, \ell}\in \mathbb{S}^{d-1}$ with positive weights $\omega_{j, \ell}$ such that
\begin{equation*}
\int_{\mathbb{S}^{d-1}}f \,\mathrm{d}\omega_{d-1} = \sum_{\ell =1}^{s_j} \omega_{j, \ell} \,f(\eta_{j, \ell})\quad \text{for all } f \in \Pi_{2^{j+1}}(\mathbb{S}^{d-1}),
\end{equation*}
then the isotropic wavelets
\begin{equation*}
\Psi_{\scriptscriptstyle K}^{ 0, 1} \equiv  1, \quad \Psi_{\scriptscriptstyle K}^{j, \ell} = \sqrt{\omega_{j,\ell}} \, T^d( g_{\eta_{j,\ell}})\Psi_{\scriptscriptstyle K}^{j}, \qquad  \ell = 1, ..., s_j, \quad j \geq 1,
\end{equation*}
form a tight frame for $L^2(\mathbb{S}^{d-1})$.

\subsection{Wavelets of high directional sensitivity}

Let us now consider the case $m=d-2$ with $d>3$. I.e., we assume the rotational invariance
\begin{equation}\label{eq105}
T^d(h)\Psi_{\scriptscriptstyle K}^{j} = \Psi_{\scriptscriptstyle K}^{j} \qquad \text{for all } h \in SO(d-2), \; j \in \mathbb{N}_0,
\end{equation}
which is obtained by choosing $\zeta_{ K, k}^{d,n}=0$ whenever $k_{2}>0$.  Utilizing \eqref{eq2}, we have
\begin{align*}
&\int_{SO(d)} \lvert \langle f, T^d(g)\Psi_{\scriptscriptstyle K}^{j} \rangle_{\mathbb{S}^{d-1}} \rvert^2 \, \mathrm{d}\mu_d(g) \\
&  \qquad \qquad = \int_{\mathbb{S}^{d-1}} \int_{\mathbb{S}^{d-2}}  \lvert \langle f, T^d(g_{\eta} g_{\hat{\eta}})\Psi_{\scriptscriptstyle K}^{j} \rangle_{\mathbb{S}^{d-1}} \rvert^2 \, \mathrm{d}\omega_{d-2}(\hat{\eta}) \, \mathrm{d}\omega_{d-1}(\eta).
\end{align*}
As discussed in \autoref{sec3}, the map $h \mapsto \langle f, T^d(h)\Psi_{\scriptscriptstyle K}^{j}\rangle_{\mathbb{S}^{d-1}}$ belongs to the polynomial subspace $\mathcal{M}_{\min(2^j,K)}^1(SO(d-1))$. Thus, 
\begin{equation*}
\hat{\eta} \mapsto \langle f, T^d(g_{\eta} g_{\hat{\eta}})\Psi_{\scriptscriptstyle K}^{j} \rangle_{\mathbb{S}^{d-1}} 
\end{equation*}
is contained in $\Pi_{\min(2^j, K)}(\mathbb{S}^{d-2})$ by \hyperref[lemma4]{Lemma~\ref*{lemma4}}, since
\begin{align*}
\langle f, T^d(g_{\eta} g_{\hat{\eta}})\Psi_{\scriptscriptstyle K}^{j} \rangle_{\mathbb{S}^{d-1}} = \int_{SO(d-2)} \langle f, T^d(g_{\eta} g_{\hat{\eta}} h)\Psi_{\scriptscriptstyle K}^{j} \rangle_{\mathbb{S}^{d-1}} \, \mathrm{d}\mu_{d-2}(h).
\end{align*}
Additionally, \hyperref[lemma4]{Lemma~\ref*{lemma4}} shows that
\begin{align*}
\eta \mapsto  \int_{\mathbb{S}^{d-2}}  \lvert \langle f, T^d(g_{\eta} g_{\hat{\eta}})\Psi_{\scriptscriptstyle K}^{j} \rangle_{\mathbb{S}^{d-1}} \rvert^2 \, \mathrm{d}\omega_{d-2}(\hat{\eta})
\end{align*}
is in $\Pi_{2^{j+1}}(\mathbb{S}^{d-1})$. Now let $\eta_{j, \ell} \in \mathbb{S}^{d-1}$, $\ell = 1, ... , s_j$, be points with positive weights $\omega_{ j, \ell}$ such that
\begin{equation*}
\int_{\mathbb{S}^{d-1}} P \, \mathrm{d}\mu_{d-1} = \sum_{\ell=1}^{s_j} \omega_{ j, \ell} \, P(\eta_{j, \ell}) \quad \text{for each } P \in \Pi_{2^{j+1}}(\mathbb{S}^{d-1}).
\end{equation*}
Similarly, we assume that
\begin{equation*}
\int_{\mathbb{S}^{d-2}} P \, \mathrm{d}\mu_{d-2} = \sum_{m=1}^{r_j} \hat{\omega}_{ j, m} \, P(\hat{\eta}_{j, m}) \quad \text{if } P \in \Pi_{2\min(2^j, K)}(\mathbb{S}^{d-2})
\end{equation*}
with $\hat{\eta}_{ j, m}\in \mathbb{S}^{d-2}$ and $\hat{\omega}_{ j, m} >0$. Then
\begin{align*}
\int_{SO(d)} \lvert \langle f, T^d(g)\Psi_{\scriptscriptstyle K}^{j} \rangle_{\mathbb{S}^{d-1}} \rvert^2 \, \mathrm{d}\mu_d(g) = \sum_{\ell=1}^{s_j} \sum_{m=1}^{r_j} \omega_{ j, \ell}\, \hat{\omega}_{ j, m} \, \lvert \langle f, T^d(g_{\eta_{ j, \ell}} g_{\hat{\eta}_{ j, m}})\Psi_{\scriptscriptstyle K}^{j} \rangle_{\mathbb{S}^{d-1}} \rvert^2.
\end{align*}
It follows that the wavelets
\begin{equation*}
\Psi_{\scriptscriptstyle K}^{ 0, 1,1} \equiv  1, \quad \Psi_{\scriptscriptstyle K}^{j, \ell, m} = \sqrt{ \omega_{j, \ell}\, \hat{\omega}_{j, m}} \, T^d( g_{\eta_{ j, \ell}} g_{\hat{\eta}_{ j, m}})\Psi_{\scriptscriptstyle K}^{j},
\end{equation*}
$\ell = 1, ..., s_j$, $m=1, ..., r_j$, $j \geq 1$, form a tight frame for $L^2(\mathbb{S}^{d-1})$.

In the following, we will propose specific choices for the values $ \zeta_{K, k}^{d, n}$ in \eqref{eq11} which yield highly directional wavelets. Hence, the functions $\Psi_{\scriptscriptstyle K}^{\scriptscriptstyle N}$, we again adopt the notation from \autoref{sec4}, constructed in this section should be well suited for analyzing anisotropic local structures such as edges. Let
\begin{equation*}
\zeta_{K, k}^{d, n} = \delta_{k_2, 0}\, \phi_{{\scriptscriptstyle K}, n}(k_1)
\end{equation*}
for some complex-valued functions $\phi_{{\scriptscriptstyle K}, n}$ on $\{0, 1, ..., n \}$. This yields wavelets of the form
\begin{align}\label{eq28}
\Psi_{\scriptscriptstyle K}^{\scriptscriptstyle N}(x) &= \sum_{n=0}^\infty \sum_{m=0}^n \sqrt{\dim \mathcal{H}_n^d }\, \kappa\!\left(\frac{n}{N}\right) \phi_{{\scriptscriptstyle K}, n}(m) \, A_{(m, 0, ..., 0)}^n \nonumber \\
& \qquad \qquad \qquad \times  C_{n-m}^{\frac{d-2}{2}+m}(x_d) \, C_m^{\frac{d-3}{2}}\!\left(\frac{x_{d-1}}{\sqrt{1-x_d^2}}\right) (1-x_d^2)^{m/2}
\end{align}
for $x= (x_1, x_2, ..., x_d) \in \mathbb{S}^{d-1}$. To ensure that the assumptions that we made on $\zeta_{K, k}^{d, n}$ in \autoref{sec3} are satisfied, each $ \phi_{{\scriptscriptstyle K}, n}$ has to be supported in $\{0, 1, ..., K\}$. Furthermore, $ \phi_{{\scriptscriptstyle K}, n}$ must be independent of $n$ for $n\geq K$, in which case we write $\phi_{{\scriptscriptstyle K}, n} = \phi_{{\scriptscriptstyle K}}$, and
\begin{equation*}
\sum_{m=0}^{\min(K, n)} \lvert \phi_{{\scriptscriptstyle K}, n}(m) \rvert^2 = 1
\end{equation*}
must hold for each $n \in \mathbb{N}$. 

Since the wavelets $\Psi_{\scriptscriptstyle K}^{\scriptscriptstyle N}$ in \eqref{eq28} are highly symmetric, there is a convenient way to visualize them in terms of two-dimensional functions. We have shown in \autoref{theorem1} that $\Psi_{\scriptscriptstyle K}^{\scriptscriptstyle N}$ is concentrated at the north pole $e^d$. Clearly, each element $\xi$ in the unit tangent space $UT_{e^d}\mathbb{S}^{d-1} = \{ (\eta, 0) \mid \eta \in \mathbb{S}^{d-2} \}$, which we simply identify with $\mathbb{S}^{d-2}$, corresponds to a smooth curve of the form
\begin{equation*}
t \mapsto \cos t \,  e^d + \sin t \, \xi, \qquad t \in [0,\pi],
\end{equation*}
and we want to illustrate the behavior of $\Psi_{\scriptscriptstyle K}^{\scriptscriptstyle N}$ along each of these paths. 
Moreover, parameterizing $\mathbb{S}^{d-2}$ by $[0, 2\pi) \times \mathbb{S}^{d-3}$ via $(\varphi, \eta_{d-3})\mapsto  \cos \varphi \, e^{d-1} +  \sin\varphi \, \eta_{d-3}$, we are lead to define
\begin{align*}
& \psi_{\scriptscriptstyle K}^{\scriptscriptstyle N}(t, \varphi)  = \Psi_{\scriptscriptstyle K}^{\scriptscriptstyle N}(\cos t \,  e^d + \sin t \, (\cos \varphi \, e^{d-1} + \sin \varphi \, \eta_{d-3}))\nonumber \\
& \quad =  \sum_{n=0}^\infty \sum_{m=0}^n \sqrt{\dim \mathcal{H}_n^d} \, \kappa\!\left( \frac{n}{N} \right)  \phi_{{\scriptscriptstyle K}, n}(m) \, A_{(m, 0, ..., 0)}^n C_{n-m}^{\frac{d-2}{2}+m}(\cos t) \, C_m^{\frac{d-3}{2}}(\cos \varphi) \, \sin^m t
\end{align*}
for $t \in [0, \pi], \; \varphi \in [0, 2\pi)$, as the expression on the right hand sight does not depend on $\eta_{d-3} \in \mathbb{S}^{d-3}$ according to \eqref{eq28}. The latter implies that for any fixed $\varphi$, $\Psi_{\scriptscriptstyle K}^{\scriptscriptstyle N}$ exhibits the same behavior in all directions $\cos \varphi e^{d-1}+ \sin \varphi \,  \eta_{d-3}$, $\eta_{d-3} \in \mathbb{S}^{d-3}$. Now, $\psi_{\scriptscriptstyle K}^{\scriptscriptstyle N}$ can be interpreted as a function on $\mathbb{B}_\pi^2= \{ x \in \mathbb{R}^2 \mid \| x\| \leq \pi \}$ given in terms of polar coordinates $(t, \varphi)$. 

The wavelets designed in the following exhibit the symmetry
\begin{equation}\label{eq107}
\Psi_{\scriptscriptstyle K}^{\scriptscriptstyle N}(\cos t \, e^d + \sin t \, (- \xi)) = (-1)^K \, \Psi_{\scriptscriptstyle K}^{\scriptscriptstyle N}(\cos t \,  e^d + \sin t \, \xi)
\end{equation}
for $\xi \in UT_{e^d}\mathbb{S}^{d-1}$, $ t \in [0, \pi]$. This is equivalent to 
\begin{equation*}
\psi_{\scriptscriptstyle K}^{\scriptscriptstyle N}(t, \varphi) = (-1)^K \psi_{\scriptscriptstyle K}^{\scriptscriptstyle N}(t, \varphi+ \pi)
\end{equation*}
and thus can be achieved by setting $ \phi_{{\scriptscriptstyle K}, n}(m) = 0$ whenever $K-m$ is odd, since $C_m^\lambda(-t) = (-1)^m C_m^\lambda (t)$.
\begin{figure}
\includegraphics[width=\textwidth]{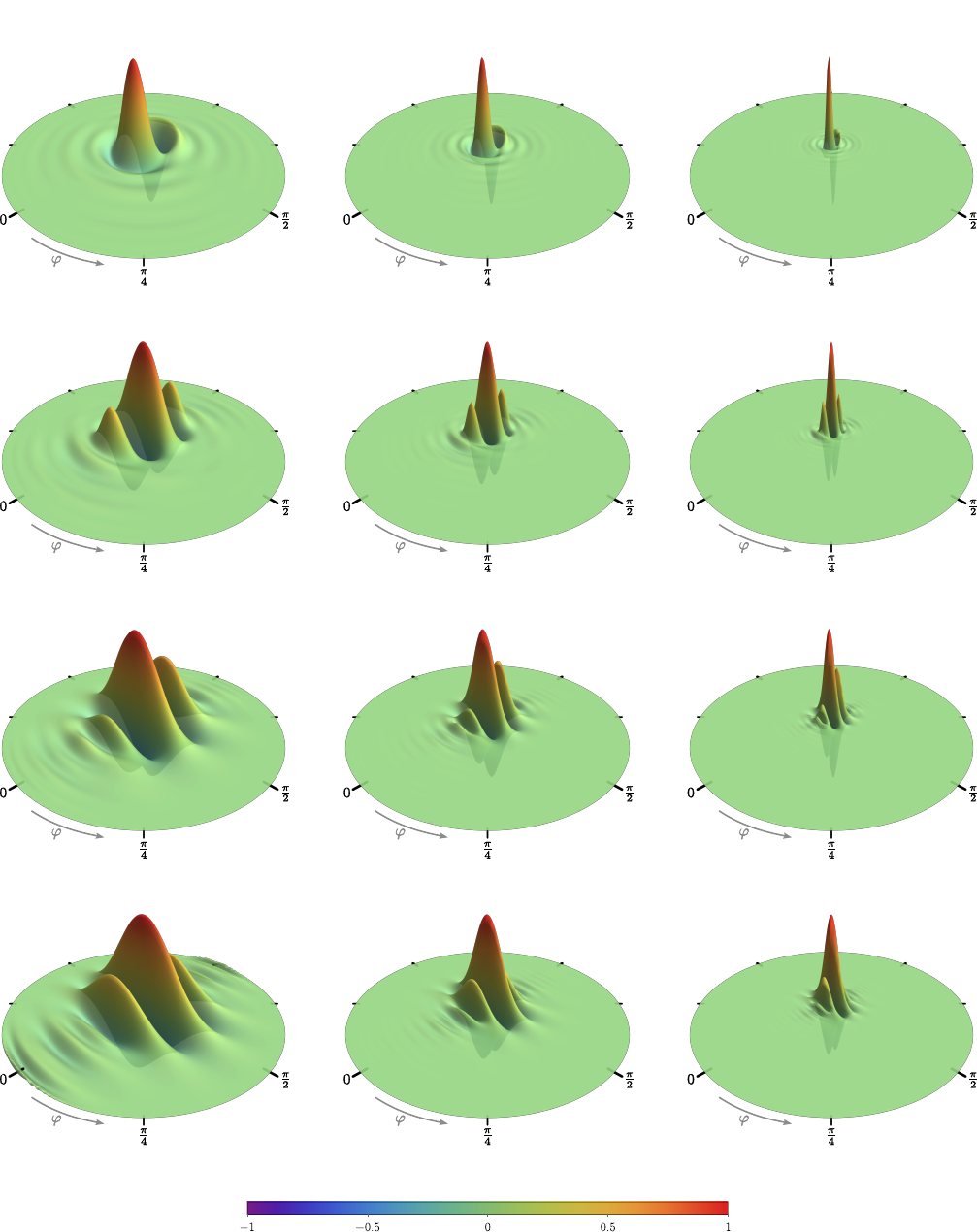}
\caption{Re-scaled functions $\psi_{\scriptscriptstyle K}^{\scriptscriptstyle N} (t, \varphi)$, $(t, \varphi) \in [0, \pi/2]\times[0, 2\pi)$, for $K=1, 4,9,16$ from top to bottom and $N=16, 32, 64$ from left to right}\label{fig1}
\end{figure}

Up to now, we discussed how wavelets with a large amount of symmetries, namely \eqref{eq105} and \eqref{eq107}, can be obtained. It remains the question of how the values $ \phi_{{\scriptscriptstyle K}, n}(m)$ might be chosen in order to guarantee an optimal directionality. As discussed in \autoref{sec4}, we will tackle this problem by optimizing the localization of the corresponding auto-correlation function. Using \hyperref[prop3.2]{Proposition~\ref*{prop3.2}}, it is not hard to arrive at the formula
\begin{align*}
&\langle T^d(h) \Psi_{\scriptscriptstyle K}^{\scriptscriptstyle N},  \Psi_{\scriptscriptstyle K}^{\scriptscriptstyle N}\rangle_{\mathbb{S}^{d-1}}= \sum_{n=0}^{\infty}\dim  \mathcal{H}_n^d \, \kappa^2\! \left( \frac{n}{N} \right) \nonumber \\
&\qquad \qquad \times \sum_{m=0}^{\min(K, n)}\frac{\Gamma(d-3) m!}{\Gamma(d+m-3)} \lvert\phi_{{\scriptscriptstyle K}, n}(m) \rvert^2 C_m^{\frac{d-3}{2}}(\langle e^{d-1} , h e^{d-1}\rangle),
\end{align*}
$h \in SO(d-1)$. Since the above expression only depends on $he^{d-1}$, the auto-correlation function can be viewed as a map on $\mathbb{S}^{d-2}$. Due to the fact that $\kappa(0)=0$, we can simply define $\phi_{{\scriptscriptstyle K}, 0}\equiv 0$. Otherwise, for $K_n = \min(K, n)$ let
\begin{equation*}
\phi_{{{\scriptscriptstyle K}, n}}(m) = \sqrt{\frac{\Gamma(\lambda)}{\Gamma(2\lambda)}} \begin{cases}
 \tilde{\phi}_{K_n}(m), \quad & m \in \{0, 1, ..., K_n\},\\
0 & \text{else},
\end{cases}
\end{equation*}
where
\begin{equation*}
\tilde{\phi}_{K_n}(m) = \begin{cases}
(-1)^{\lfloor \frac{m}{2} \rfloor}  \sqrt{ \frac{K_n !(m+\lambda) \Gamma(d+m-3)}{2^{K_n}(\frac{K_n-m}{2})! \Gamma(\lambda +\frac{K_n+m}{2}+1) m!}}, \quad & K_n -m \text{ even}, \\
0, & K_n -m \text{ odd},
\end{cases}
\end{equation*}
and $\lambda = \frac{d-3}{2}$. Note that the conditions on $ \phi_{{\scriptscriptstyle K}, n}$ discussed above are satisfied due to the well known identity
\begin{equation*}
t^n = \frac{n!}{2^n} \sum_{\substack{m=0 \\ n-m \text{ even}}}^n \frac{(m+ \lambda ) \Gamma(\lambda)}{(\frac{n-m}{2})! \Gamma(\lambda + \frac{n+m}{2} +1)} C_m^\lambda(t),\quad t \in [-1,1], \; n \in \mathbb{N}_0.
\end{equation*}
This formula (see e.g.\ \citep[p. 487]{bib32}) also shows that
\begin{equation*}
\langle T^d(h) \Psi_{\scriptscriptstyle K}^{\scriptscriptstyle N},  \Psi_{\scriptscriptstyle K}^{\scriptscriptstyle N}\rangle_{\mathbb{S}^{d-1}}= \sum_{n=0}^{\infty}\dim  \mathcal{H}_n^d \, \kappa^2\! \left( \frac{n}{N} \right) (\langle e^{d-1} , h e^{d-1}\rangle)^{K_n}.
\end{equation*}
Hence, the auto-correlation function is of optimal localization under the constraints set by $K$. The resulting wavelets $\Psi_{\scriptscriptstyle K}^{\scriptscriptstyle N}$ are visualized in terms of the functions $\psi_{\scriptscriptstyle K}^{\scriptscriptstyle N}$ in \autoref{fig1}. Here, for the sake of clarity, each of the images has been re-scaled to take values between $-1$ and $1$.

\bibliographystyle{elsarticle-harv}
\bibliography{mybib}

\end{document}